\theoremstyle{plain}\newtheorem{Theorem}{Theorem}[section]
\theoremstyle{plain}
\theoremstyle{plain}\newtheorem{Corollary}[Theorem]{Corollary}
\theoremstyle{plain}\newtheorem{Lemma}[Theorem]{Lemma}
\theoremstyle{plain}
\theoremstyle{plain}
\theoremstyle{definition}
\theoremstyle{definition}
\theoremstyle{definition}
\theoremstyle{definition}\newtheorem{Remark}[Theorem]{Remark}
\theoremstyle{definition}
\theoremstyle{definition}\newtheorem{paragr}[Theorem]{}
  \def\OG{{\mathcal{O}G}}  
  \def\OH{{\mathcal{O}H}}  
  \def\OP{{\mathcal{O}P}}
  \def\OQ{{\mathcal{O}Q}}
\def\CF{{\mathcal{F}}}
\def\CO{{\mathcal{O}}}
\def\Aut{\mathrm{Aut}}           \def\tenk{\otimes_k}     
\def\Br{\mathrm{Br}}             \def\ten{\otimes}
           \def\tenkP{\otimes_{kP}}
\def\End{\mathrm{End}}
\def\foc{\mathfrak{foc}}
\def\Hom{\mathrm{Hom}}           
\def\hyp{\mathfrak{hyp}}
\def\ker{\mathrm{ker}}           
\def\Id{\mathrm{Id}}             \def\tenA{\otimes_A}
\def\Im{\mathrm{Im}}             \def\tenB{\otimes_B}
\def\Ind{\mathrm{Ind}}           
\def\Inn{\mathrm{Inn}}
\def\Irr{\mathrm{Irr}}           
           \def\tenO{\otimes_{\mathcal{O}}}
\def\mod{\mathrm{mod}}
\def\op{\mathrm{op}}
\def\Out{\mathrm{Out}}
\def\Pic{\mathrm{Pic}}
           \def\tenOP{\otimes_{\mathcal{O}P}}
         \def\tenOQ{\otimes_{\mathcal{O}Q}}
\title{On automorphisms and focal subgroups of blocks} 
\author{Markus Linckelmann} 
\date{}
\begin{document}

\maketitle

\begin{abstract}
Given a $p$-block $B$ of a finite group with defect group $P$ and
fusion system $\CF$ on $P$ we show that the rank of the group
$P/\foc(\CF)$ is invariant under stable equivalences of Morita type.
The main ingredients are the $*$-construction, due to Brou\'e and 
Puig, a theorem of Weiss on linear source modules, arguments
of Hertweck and Kimmerle applying Weiss' theorem to blocks, and
connections with integrable derivations in the Hochschild cohomology
of block algebras.
\end{abstract}

\section{Introduction}

Throughout this paper, $p$ is a prime, and $\CO$ is a complete discrete 
valuation ring with maximal ideal $J(\CO)=$ $\pi\CO$ for some
$\pi\in$ $\CO$, residue field $k=$ $\CO/J(\CO)$ of characteristic 
$p$, and field of fractions $K$ of characteristic zero. 
For any $\CO$-algebra $A$ which is free of finite rank as an $\CO$-module
and for any positive integer $r$ denote by $\Aut_r(A)$ the group of
$\CO$-algebra automorphisms $\alpha$ with the property that $\alpha$
induces the identity on $A/\pi^rA$, and denote by $\Out_r(A)$ the
image of $\Aut_r(A)$ in the outer automorphism group $\Out(A)=$
$\Aut(A)/\Inn(A)$ of $A$. 

Given a finite group $G$, a {\it block of $\OG$} is an indecomposable 
direct factor $B$ of $\OG$ as an algebra. Any such block $B$ determines 
a $p$-subgroup $P$ of $G$, called a {\it defect group of $B$}. A 
primitive idempotent $i$ in $B^P$ such that $\Br_P(i)\neq$ $0$ is called 
a {\it source idempotent}; the choice of a source idempotent determines 
a fusion system $\CF$ on $P$. We denote by $\foc(\CF)$ the $\CF$-focal 
subgroup of $P$; this is the subgroup of $P$ generated by all elements 
of the form $\varphi(u)u^{-1}$, where $u\in$ $P$ and $\varphi\in$ 
$\Hom_\CF(\langle u\rangle, P)$. Clearly $\foc(\CF)$ is a normal 
subgroup of $P$ containing the derived subgroup of $P$.

If $\CO$ is large enough, then the Brou\'e-Puig $*$-construction in 
\cite{BrPuloc} induces an action of the group 
$\Hom(P/\foc(\CF),\CO^\times)$ on the set $\Irr_K(B)$ of irreducible 
$K$-valued characters of $G$ associated with $B$, sending $\zeta\in$ 
$\Hom(P/\foc(\CF),\CO^\times)$ and $\chi\in$ $\Irr_K(B)$ to 
$\zeta*\chi\in$ $\Irr_K(B)$. The group $\Out(B)$ acts in the obvious way 
on $\Irr_K(B)$ by precomposing characters with automorphisms; that is, 
for $\alpha\in$ $\Aut(B)$ and $\chi\in$ $\Irr_K(B)$, viewed as a central 
function on $B$, the assignment $\chi^\alpha(x)=$ $\chi(\alpha(x))$ for 
all $x\in$ $G$ defines a character $\chi^\alpha \in$ $\Irr_K(B)$ which 
depends only on the image of $\alpha$ in $\Out(B)$.
See \S \ref{background} below for more details and references.

\begin{Theorem} \label{star-lift}
Let $G$ be a finite group. Let $B$ be a block algebra of $\OG$ with a 
nontrivial defect group $P$, source idempotent $i\in$ $B^P$
and associated fusion system $\CF$ on $P$. 
Suppose that $\CO$ contains a primitive $|G|$-th root of unity. 
Let $\tau_p$ be a primitive $p$-th root of unity in $\CO$ and let $m$ be 
the positive integer such that $\pi^m\CO=$ $(1-\tau_p)\CO$. Let $\mu$ be 
the subgroup of $\CO^\times$ generated by $\tau_p$.
There is a unique injective group homomorphism
$$\Phi : \Hom(P/\foc(\CF), \CO^\times) \to \Out_1(B)$$
such that for any $\zeta\in$ $\Hom(P/\foc(\CF), \CO^\times)$ the class 
$\Phi(\zeta)$ in $\Out_1(B)$ has a representative in $\Aut_1(B)$ 
which sends $ui$ to $\zeta(u)ui$ for all $u\in$ $P$. Moreover, $\Phi$
has the following properties.

\smallskip\noindent (i)
For any $\zeta\in$ $\Hom(P/\foc(\CF), \CO^\times)$ and any $\chi\in$ 
$\Irr_K(B)$ we have $\chi^{\Phi(\zeta)}=$ $\zeta * \chi$. 

\smallskip\noindent (ii)
If $\CO$ is finitely generated as a module over the ring of $p$-adic
integers, then the group homomorphism $\Phi$ restricts to an isomorphism
$\Hom(P/\foc(\CF), \mu) \cong \Out_m(B)$.
\end{Theorem}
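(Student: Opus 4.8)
\noindent \emph{Proof strategy.}\ The plan is to construct $\Phi$ first on the source algebra $A := iBi$ and then to carry it over to $B$ along the Morita equivalence given by the bimodules $Bi$ and $iB$, using that $BiB = B$. For $\zeta\in\Hom(P/\foc(\CF),\CO^\times)$, regarded as a linear character of $P$, the rule $u\mapsto\zeta(u)u$ defines an $\CO$-algebra automorphism $\gamma_\zeta$ of $\CO P$, and $\zeta\mapsto\gamma_\zeta$ is a group homomorphism. Since $P$ is a $p$-group, each $\zeta(u)$ is a $p$-power root of unity, so $\zeta(u)\equiv1\pmod{J(\CO)}$ and $\gamma_\zeta\in\Aut_1(\CO P)$; moreover $\gamma_\zeta\in\Aut_m(\CO P)$ whenever $\zeta$ has values in $\mu$, because $1-\tau_p^j=(1-\tau_p)(1+\tau_p+\cdots+\tau_p^{j-1})\in\pi^m\CO$. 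The first substantive step is to extend $\gamma_\zeta$ to an automorphism $\alpha_\zeta$ of $A$ with $\alpha_\zeta(ui)=\zeta(u)ui$ for all $u\in P$; this is the Brou\'e--Puig $*$-construction of \cite{BrPuloc} realised as an automorphism of the source algebra. Here one uses Puig's local description of $A$: as an $\CO[P\times P]$-module $A$ is a $p$-permutation module whose indecomposable summands have twisted-diagonal vertices $\Delta_\varphi=\{(\varphi(u),u):u\in Q\}$ attached to the morphisms $\varphi\in\Hom_\CF(Q,P)$, and the hypothesis that $\zeta$ vanishes on $\foc(\CF)$ — that is, $\zeta(\varphi(u))=\zeta(u)$ for all such $\varphi$ and $u$ — is precisely what makes the rescaling of an $\CF$-adapted $\CO$-basis of $A$ well-defined and multiplicative. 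One checks that $\alpha_\zeta$ is an algebra automorphism restricting to $\gamma_\zeta$ on the image of $\CO P$ and lying in $\Aut_1(A)$, resp.\ in $\Aut_m(A)$ when $\zeta$ has values in $\mu$.

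To transport $\alpha_\zeta$ to $B$, I would argue as follows. Since $\alpha_\zeta$ is trivial modulo $\pi$ it fixes the isomorphism classes of all projective $A$-modules, so the invertible $(B,B)$-bimodule $X_\zeta:=Bi\otimes_A{}_{\alpha_\zeta}(iB)$ is isomorphic to $B$ both as a left and as a right $B$-module, and reduces modulo $\pi$ to the identity bimodule of $B/\pi B$; hence $X_\zeta\cong{}_\beta B_1$ for some $\beta\in\Aut(B)$, unique up to inner automorphisms, and a short computation with the Morita context $(A,B,Bi,iB)$ shows that $\beta|_A$ differs from $\alpha_\zeta$ by an inner automorphism of $A$. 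Using idempotent lifting and the fact that $Z(A)\cong Z(B)$ is a local ring, one then normalises $\beta$ inside its outer class so that $\beta(i)=i$ and $\beta|_A=\alpha_\zeta$; the resulting $\beta_\zeta\in\Aut_1(B)$ satisfies $\beta_\zeta(ui)=\zeta(u)ui$, and lies in $\Aut_m(B)$ when $\zeta$ has values in $\mu$. One sets $\Phi(\zeta):=[\beta_\zeta]\in\Out_1(B)$.

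Independence of all choices — hence the uniqueness of $\Phi$ and the fact that it is a group homomorphism — will follow once one shows that any $\delta\in\Aut_1(B)$ fixing every $ui$ is inner: such a $\delta$ fixes $i$, so $\delta|_A\in\Aut_1(A)$ fixes the image of $\CO P$ pointwise, and the $\CF$-adapted basis of $A$ together with a relative Skolem--Noether argument forces $\delta|_A$ to be conjugation by an element of $1+\pi A$, which also implements $\delta$ on $B=BiB$. Injectivity of $\Phi$ I would deduce from part (i) together with the faithfulness of the $*$-action of $\Hom(P/\foc(\CF),\CO^\times)$ on $\Irr_K(B)$; alternatively, directly: if $\beta_\zeta$ is inner then so is $\alpha_\zeta$, whence $A\cong A\otimes_\CO\CO_{\zeta,1}$ as $\CO[P\times P]$-modules, where $(v,w)$ acts on $\CO_{\zeta,1}$ by $\zeta(v)$, and inspecting the diagonal summand $\CO\{ui:u\in P\}\cong\CO[P\times P]/\Delta$ of $A$ forces $\zeta=1$. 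Part (i) is a comparison of normalisations: by definition the $*$-construction replaces the $\CO P$-source of an irreducible character by its $\zeta$-twist, and $\beta_\zeta$ is exactly the automorphism effecting that twist on the image of $\CO P$, so $\chi\circ\beta_\zeta=\zeta*\chi$ drops out once the conventions of \cite{BrPuloc} are matched with the one above, e.g.\ by tracking the truncation $iM$ of an $\CO G$-module $M$ affording $\chi$.

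Part (ii) is the substantial point. The inclusion $\Phi(\Hom(P/\foc(\CF),\mu))\subseteq\Out_m(B)$ and the injectivity on this subgroup have already been recorded, and the Morita-context argument above identifies $\Out_m(B)$ with $\Out_m(A)$, so everything reduces to showing $\Out_m(A)=\Phi(\Hom(P/\foc(\CF),\mu))$, i.e.\ that every $\beta\in\Aut_m(A)$ agrees modulo inner automorphisms with some $\alpha_\zeta$, $\zeta\in\Hom(P/\foc(\CF),\mu)$. Writing $\beta=\mathrm{id}+\pi^mD+\cdots$ with $D$ the class of a derivation, the obstruction analysis takes place in the first Hochschild cohomology of $B$ and must establish that the only integrable classes — those actually arising from an $\CO$-algebra automorphism — are, up to inner automorphisms, the ones produced by the $*$-construction. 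To pin this down one applies Weiss' theorem on linear source lattices to the $\CO[P\times P]$-lattice ${}_\beta B_1$: the congruence $\beta\equiv\mathrm{id}\pmod{\pi^m}$, with $m$ chosen so that $\pi^m\CO=(1-\tau_p)\CO$, is precisely the hypothesis under which the arguments of Hertweck and Kimmerle applying Weiss' theorem force ${}_\beta B_1$ to be a permutation $\CO[P\times P]$-lattice whose basis is permuted by $P\times P$ according to $\CF$-fusion; comparing this basis with the evident permutation basis of $B$ then forces $\beta(ui)=\lambda(u)ui$ modulo inner automorphisms for a function $\lambda:P\to\mu$, which is seen to be a homomorphism vanishing on $\foc(\CF)$, so that $\beta=\Phi(\lambda)$. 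I expect this Weiss / Hertweck--Kimmerle step — identifying ${}_\beta B_1$ as a lattice to which Weiss' theorem applies, and reading off from its permutation structure that $\beta$ is one of the $\beta_\zeta$ — to be the main obstacle; the remaining steps are essentially bookkeeping with source algebras and the $*$-construction.
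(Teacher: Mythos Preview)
Your overall architecture matches the paper's: build $\Psi$ on the source algebra $A=iBi$, transport to $B$ along the $Bi$--$iB$ Morita equivalence, verify (i) by computing on local pointed elements, and for (ii) invoke Weiss via Hertweck--Kimmerle. Two points need correction.

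First, the claim that ``rescaling an $\CF$-adapted $\CO$-basis of $A$'' yields an \emph{algebra} automorphism is not justified by the condition $\zeta(\varphi(u))=\zeta(u)$ alone. What that condition and the bimodule decomposition of $A$ give you is an $\OP$-$\OP$-bimodule isomorphism $A\cong{}_\eta A_\eta$ (this is Lemma~\ref{foc1}/\ref{foc2} in the paper); promoting this to an algebra automorphism extending $\eta$ is a genuine further step, handled in the paper either via the hyperfocal subalgebra $D$ (so that $A=\bigoplus_u Du$ and one sets $\alpha(du)=\zeta(u)du$, Lemma~\ref{zeta-extend1}) or via a lifting argument through $A_\eta\tenOP A$ (Lemmas~\ref{foc3}, \ref{foc4}). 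Multiplicativity does not drop out of well-definedness on a basis.

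Second, and more seriously, your use of Weiss' theorem in (ii) is misstated in a way that breaks the argument. With $\beta\equiv\Id\pmod{\pi^m}$ and $\pi^m\CO=(1-\tau_p)\CO$, the conclusion of Weiss (as in \cite[Theorem~3.2]{HeKi}) is that $B_\beta$ is a \emph{monomial} $\CO(P\times P)$-lattice, not a permutation lattice. If it were a permutation lattice, comparison with the permutation basis of $B$ would force $\lambda=1$ and hence $\Out_m(B)=1$, which is false. The paper instead observes that $iB_\beta$ is then an indecomposable linear-source $\CO(P\times G)$-module with vertex $\Delta P$, so $iB_\beta$ is a summand of $\Ind^{P\times G}_{\Delta P}(\CO_\zeta)$ for some $\zeta:P\to\CO^\times$; this is where the character actually enters. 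Furthermore, the monomial conclusion does not by itself force $\zeta$ to take values in $\mu$: one needs the separate observation that $\beta\in\Aut_m$ implies $\beta^p\in\Aut_{m+1}$, and \cite[3.13]{HeKi} gives $\Out_{m+1}=1$, so $\beta^p$ is inner and hence $\zeta^p=1$ by the injectivity of $\Phi$ already established. You are missing this step.
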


\begin{Remark}
The group homomorphism $\Phi$ lifts the well-known action of
$\Hom(P/\foc(\CF),\CO^\times)$ on $\Irr_K(B)$ via the $*$-construction.
The existence of $\Phi$ as stated is a straightforward consequence of 
the hyperfocal subalgebra of a block. We will give a proof which does 
not require the hyperfocal subalgebra, based on some more general 
statements on automorphisms of source algebras in section 
\ref{autSection}. The point of statement (ii) is that the left side in 
the isomorphism depends on 
the fusion system of $B$ and the right side on the $\CO$-algebra 
structure of $B$. The extent of the connections between these two 
aspects of block theory remains mysterious. Numerous `local to global' 
conjectures predict that invariants of the fusion system of a block $B$ 
should essentially determine invariants of the $\CO$-algebra $B$, if 
not outright then up to finitely many possibilities. The `global to 
local' direction is perhaps even less understood: does the $\CO$-algebra
structure of a block algebra determine the key invariants on the local 
side, such as defect groups, fusion systems, and possibly 
K\"ulshammer-Puig  classes? 
\end{Remark}

\begin{Remark}
With the notation above, the group $P/\foc(\CF)$ has a topological 
interpretation: by \cite[Theorem 2.5]{BCGLO} this group is the 
abelianisation of the fundamental group of the $p$-completed nerve of a 
centric linking system of $\CF$.
\end{Remark}

The subgroup $\Hom(P/\foc(\CF),\mu)$ of $\Hom(P/\foc(\CF),\CO^\times)$
is isomorphic to the quotient of $P/\foc(\CF)$ by its Frattini 
subgroup. Since $P/\foc(\CF)$ is abelian, it follows that the rank 
of $\Hom(P/\foc(\CF),\mu)$ is equal to the rank of $P/\foc(\CF)$. 
Thus Theorem \ref{star-lift} has the following consequence.

\begin{Corollary} \label{star-lift-Cor1}
Suppose that $\CO$ is finitely generated as a module over the ring of 
$p$-adic integers. With the notation from \ref{star-lift}, the group 
$\Out_m(B)$ is a finite elementary abelian $p$-group of rank equal to 
the rank of the abelian $p$-group $P/\foc(\CF)$. In particular, if 
$P/\foc(\CF)$ is elementary abelian, then $\Out_m(B)\cong$ 
$P/\foc(\CF)$.
\end{Corollary}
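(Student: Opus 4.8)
The plan is to obtain this as a formal consequence of Theorem \ref{star-lift}(ii). Under the hypothesis that $\CO$ is finitely generated over the ring of $p$-adic integers, that theorem provides an isomorphism of abelian groups $\Hom(P/\foc(\CF),\mu)\cong\Out_m(B)$, so it suffices to determine the structure of the left-hand side $\Hom(P/\foc(\CF),\mu)$.

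First I would record that $\mu$ is cyclic of order $p$: it is generated by the primitive $p$-th root of unity $\tau_p$, and the equation $\pi^m\CO=(1-\tau_p)\CO$ only serves to define the integer $m$. Set $A=P/\foc(\CF)$; since $\foc(\CF)$ contains the derived subgroup of $P$, as noted in the introduction, $A$ is a finite abelian $p$-group. Fixing an isomorphism $\mu\cong\Z/p\Z$, every homomorphism $A\to\mu$ kills $pA$, because $\mu$ has exponent $p$; hence such a homomorphism factors uniquely through the quotient $A/pA$, which is precisely the quotient of $A$ by its Frattini subgroup. Consequently $\Hom(A,\mu)\cong\Hom(A/pA,\Z/p\Z)$, the $\F_p$-dual of the $\F_p$-vector space $A/pA$. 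By the structure theorem for finite abelian $p$-groups, $\dim_{\F_p}(A/pA)$ equals the rank of $A$, and the dual space has the same dimension. Therefore $\Out_m(B)\cong\Hom(A,\mu)$ is a finite elementary abelian $p$-group of rank equal to the rank of $P/\foc(\CF)$, which is the first assertion.

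For the last sentence, if $A=P/\foc(\CF)$ is itself elementary abelian then $pA=1$, so $\Hom(A,\mu)$ is the full $\F_p$-dual of $A$; a finite-dimensional $\F_p$-vector space is (non-canonically) isomorphic to its dual, whence $\Out_m(B)\cong A=P/\foc(\CF)$.

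I expect no genuine obstacle here: all the substantive work is done in Theorem \ref{star-lift}, and the reduction above uses only the elementary linear algebra of $\Hom(-,\Z/p\Z)$ on finite abelian $p$-groups. The only two points one should not skip are the identification of $\mu$ with a group of order $p$ and the fact that $A=P/\foc(\CF)$ is abelian, both of which are immediate from the setup in the theorem and the remarks preceding it.
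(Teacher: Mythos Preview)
Your proposal is correct and follows essentially the same approach as the paper: both invoke Theorem \ref{star-lift}(ii) to identify $\Out_m(B)$ with $\Hom(P/\foc(\CF),\mu)$ and then use that $\mu$ has order $p$ to identify this with (the $\F_p$-dual of) the Frattini quotient of the abelian group $P/\foc(\CF)$. One cosmetic remark: the letter $A$ is already reserved in the paper for the source algebra $iBi$, so you should pick a different symbol for $P/\foc(\CF)$.
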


Combining Theorem \ref{star-lift}  with invariance statements on the 
subgroups $\Out_m(B)$ from \cite{Linder} yields the following
statement.

\begin{Corollary} \label{star-lift-Cor2}
Suppose that $\CO$ is finitely generated as a module over the ring of 
$p$-adic integers. 
Let $G$, $G'$ be finite groups, and let $B$, $B'$ be block algebras
of $\OG$, $\OG'$ with nontrivial defect groups $P$, $P'$ and fusion 
systems $\CF$, $\CF'$ on $P$, $P'$, respectively. If there is a stable 
equivalence of Morita type between $B$ and $B'$, then the ranks of the 
abelian $p$-groups $P/\foc(\CF)$ and $P'/\foc(\CF')$ are equal.
\end{Corollary}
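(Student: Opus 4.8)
The plan is to combine Corollary~\ref{star-lift-Cor1} with the invariance of the subgroups $\Out_m(B)$ under stable equivalences of Morita type, as established in \cite{Linder}. Fix $\CO$ finitely generated over the $p$-adic integers and large enough to contain a primitive $|G|$-th and a primitive $|G'|$-th root of unity (enlarging $\CO$ does not change the ranks in question, nor does it destroy a stable equivalence of Morita type, since these are inherited under the scalar extension along an unramified extension of discrete valuation rings). Let $\tau_p$ be a primitive $p$-th root of unity in $\CO$ and let $m$ be the positive integer with $\pi^m\CO=(1-\tau_p)\CO$; note that $m$ depends only on $\CO$, so it is the same for $B$ and $B'$.

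First I would invoke Corollary~\ref{star-lift-Cor1} twice: it gives that $\Out_m(B)$ is a finite elementary abelian $p$-group whose rank equals the rank of $P/\foc(\CF)$, and likewise $\Out_m(B')$ has rank equal to the rank of $P'/\foc(\CF')$. So it suffices to show that a stable equivalence of Morita type between $B$ and $B'$ forces $\Out_m(B)\cong\Out_m(B')$, or at least that these two groups have the same rank. This is precisely where the results of \cite{Linder} enter: a stable equivalence of Morita type between $B$ and $B'$ induces, via the associated bimodule and the theory of integrable derivations in Hochschild cohomology (the mechanism alluded to in the abstract), a compatible isomorphism between $\Out_m(B)$ and $\Out_m(B')$ for all $m$, or more precisely a group isomorphism $\StPic$-equivariant enough to match up the subgroups $\Out_r$. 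I would cite the relevant statement from \cite{Linder} directly, since the excerpt permits assuming it.

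Putting the pieces together: the stable equivalence of Morita type yields $\Out_m(B)\cong\Out_m(B')$; by Corollary~\ref{star-lift-Cor1} the left-hand side has rank equal to the rank of $P/\foc(\CF)$ and the right-hand side has rank equal to the rank of $P'/\foc(\CF')$; hence these two ranks coincide. The only subtlety is the reduction to $\CO$ large enough and finitely generated over $\Z_p$ simultaneously for both groups, and checking that a stable equivalence of Morita type is preserved under the corresponding (unramified, hence flat and well-behaved) extension of coefficient rings; this is routine but must be stated.

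The main obstacle I expect is not in this corollary itself, which is essentially a two-line deduction once Theorem~\ref{star-lift}, Corollary~\ref{star-lift-Cor1}, and the invariance results of \cite{Linder} are in hand. Rather, the real work lies upstream: proving that $\Out_m(B)$ is genuinely invariant under stable equivalence of Morita type, which rests on Weiss' theorem on linear source modules, the Hertweck--Kimmerle adaptation of it to blocks, and the identification of $\Out_m(B)$ with a group of integrable derivations in $HH^1$ that transports across the stable category. If that invariance statement from \cite{Linder} were only available for \emph{isomorphism} of $\Out_m$ rather than for the rank, one would additionally need that both $\Out_m(B)$ and $\Out_m(B')$ are elementary abelian $p$-groups, which is exactly the content of Corollary~\ref{star-lift-Cor1}, so the rank comparison still goes through.
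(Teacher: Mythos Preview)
Your proposal is correct and follows essentially the same route as the paper: the paper's entire justification is the sentence preceding the corollary, ``Combining Theorem~\ref{star-lift} with invariance statements on the subgroups $\Out_m(B)$ from \cite{Linder} yields the following statement,'' and you have unpacked precisely this combination (Corollary~\ref{star-lift-Cor1} for each block, together with the invariance result recorded in the paper as Theorem~\ref{outmstable}).

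One technical slip worth flagging: the extension of $\CO$ needed to adjoin a primitive $|G|$-th root of unity is in general \emph{ramified}, not unramified, because $p$ divides $|G|$ (the defect group is nontrivial) and adjoining $p$-power roots of unity to a $p$-adic ring produces a totally ramified extension. This does not damage your argument, though: the ranks of $P/\foc(\CF)$ and $P'/\foc(\CF')$ are independent of $\CO$, stable equivalences of Morita type are preserved under any flat extension of the coefficient ring, and Theorem~\ref{outmstable} has no ramification hypothesis. So simply drop the word ``unramified'' and the reduction goes through.
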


It remains an open question whether there is in fact an isomorphism
$P/\foc(\CF)\cong$ $P'/\foc(\CF')$ in the situation of this corollary.
If $P$ and $P'$ are elementary abelian, this follows trivially from
the above. In that case one can be slightly more precise, making use of
the following well-known facts. The Hochschild cohomology in positive 
degrees of a block algebra is invariant under stable equivalences of 
Morita type. In particular, a stable equivalence of Morita type between
two block algebras preserves the Krull dimensions of their Hochschild 
cohomology algebras over $k$, and these dimensions are equal to the rank 
of the defect groups. A stable equivalence of Morita type between two 
block algebras preserves also the order of the defect groups. 
A finite $p$-group which has the same order and rank as an elementary
abelian $p$-group is necessarily elementary abelian as well.

\begin{Corollary} \label{star-lift-Cor3}
Suppose that $\CO$ is finitely generated as a module over the ring of 
$p$-adic integers. 
With the notation of \ref{star-lift-Cor2}, if there is a stable 
equivalence of Morita type between $B$ and $B'$ and if one of $P$, 
$P'$ is elementary abelian, then there is an isomorphism $P\cong$ $P'$ 
which induces isomorphisms $\foc(\CF)\cong$ $\foc(\CF')$ and 
$P/\foc(\CF)\cong$ $P'/\foc(\CF')$.
\end{Corollary}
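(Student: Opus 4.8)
The plan is to combine Corollary \ref{star-lift-Cor2} with the two standard invariants of a stable equivalence of Morita type that are recalled in the paragraph preceding the corollary: the order of the defect group and the Krull dimension of the Hochschild cohomology over $k$ (which equals the rank of the defect group). First I would dispose of the case distinction: assume without loss of generality that $P$ is elementary abelian. Then $|P|=|P'|$ since a stable equivalence of Morita type preserves defect groups up to isomorphism in order, and $\mathrm{rk}(P)=\mathrm{rk}(P')$ since the Krull dimensions of the Hochschild cohomology algebras $HH^*(kB)$ and $HH^*(kB')$ agree and each equals the rank of the respective defect group. A finite $p$-group $P'$ with $|P'|=|P|=p^n$ and $\mathrm{rk}(P')=n$ is forced to be elementary abelian as well, so $P\cong P'$, and any such isomorphism is simply an isomorphism of $\F_p$-vector spaces of the same dimension.

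Next I would produce the compatibility with the focal subgroups. Since $P/\foc(\CF)$ and $P'/\foc(\CF')$ are quotients of elementary abelian groups, they are themselves elementary abelian, so by Corollary \ref{star-lift-Cor2} they have the same rank and hence are isomorphic as abelian groups; likewise $\foc(\CF)$ and $\foc(\CF')$ are elementary abelian, and from the short exact sequences $1\to\foc(\CF)\to P\to P/\foc(\CF)\to 1$ (which split, $P$ being elementary abelian) one reads off that $\foc(\CF)$ and $\foc(\CF')$ have the same rank, hence are isomorphic. The remaining point is to assemble these into a single isomorphism $P\cong P'$ that simultaneously restricts to an isomorphism $\foc(\CF)\cong\foc(\CF')$ and descends to an isomorphism $P/\foc(\CF)\cong P/\foc(\CF')$. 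This is a routine exercise in linear algebra over $\F_p$: choose a splitting $P=\foc(\CF)\oplus C$, a splitting $P'=\foc(\CF')\oplus C'$, pick any $\F_p$-isomorphisms $\foc(\CF)\cong\foc(\CF')$ and $C\cong C'$ (possible since corresponding dimensions agree), and take the direct sum.

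The only genuine input beyond elementary group theory is Corollary \ref{star-lift-Cor2}, which gives the equality of the ranks of $P/\foc(\CF)$ and $P'/\foc(\CF')$ and thereby the isomorphism of these quotients; everything else is bookkeeping. I do not anticipate a serious obstacle here — the corollary is essentially a packaging of Corollary \ref{star-lift-Cor2} together with the cited facts on Hochschild cohomology and defect groups under stable equivalence of Morita type. The one subtlety worth stating carefully is the logical structure of the hypothesis ``one of $P$, $P'$ is elementary abelian'': it is the combination with the preserved order and preserved Hochschild Krull dimension that upgrades this to ``both are elementary abelian'', after which the conclusion follows formally.
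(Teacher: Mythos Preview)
Your proposal is correct and follows essentially the same approach as the paper: the paragraph preceding the corollary already lays out the argument you give (preservation of defect group order, of Hochschild Krull dimension equal to the rank, hence both $P$ and $P'$ elementary abelian, then apply Corollary \ref{star-lift-Cor2}). You have been a bit more explicit than the paper about the final $\F_p$-linear algebra step assembling the three isomorphisms into a single compatible one, but this is exactly the argument the paper has in mind.
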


The main ingredients for the proof of Theorem \ref{star-lift} are 
results of Puig on source algebras of blocks, a theorem of Weiss 
\cite[Theorem 3]{Weiss}, and results from Hertweck and Kimmerle 
\cite{HeKi}. 

\medskip
Theorem \ref{star-lift} (ii) can be formulated in terms of integrable 
derivations, a concept due to Gerstenhaber \cite{Ger1}, adapted to 
unequal characteristic in \cite{Linder}. 
Let $A$ be an $\CO$-algebra such that $A$ is free
of finite rank as an $\CO$-module. Let $r$ be a positive integer and
let $\alpha\in$ $\Aut_r(A)$. Then $\alpha(a)=$ $a + \pi^r\mu(a)$ for 
all $a\in$ $A$ and some linear endomorphism $\mu$ of $A$. 
The endomorphism of $A/\pi^rA$ induced by $\mu$ is a derivation on 
$A/\pi^rA$. Any derivation on $A/\pi^rA$ which arises in this way is 
called {\it $A$-integrable}. The set of $A$-integrable derivations of 
$A/\pi^rA$ is an abelian group containing all inner derivations, hence 
determines a subgroup of $HH^1(A/\pi^rA)$, denoted $HH^1_A(A/\pi^rA)$.
Note that $\pi^r$ annihilates $HH^1(A/\pi^rA)$. Thus, if $p\in$
$\pi^rA$, then $HH^1_A(A/\pi^rA)$ is an elementary abelian quotient
of $\Out_r(A)$. See \cite[\S 3]{Linder} for more details.

\begin{Theorem} \label{der-foc}
Let $G$ be a finite group. Let $B$ a block of $\OG$ with a nontrivial 
defect group $P$ and a fusion system $\CF$ on $P$. Suppose that $\CO$ 
contains a primitive $|G|$-th root of unity and that $\CO$ is finitely 
generated as a module over the ring of $p$-adic integers. 
Denote by $\tau_p$ a primitive $p$-th root of unity in $\CO$, and let 
$m$ be the positive integer such that $\pi^m\CO=$ $(1-\tau_p)\CO$.
We have a canonical group isomorphism 
$$\Out_m(B)\cong HH^1_B(B/\pi^mB)\ .$$ 
In particular, $HH^1_B(B/\pi^mB)$ is a finite elementary abelian 
$p$-group of rank equal to the rank of $P/\foc(\CF)$.
\end{Theorem}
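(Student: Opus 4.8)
The plan is to identify the group homomorphism in the statement with the canonical surjection $\Out_m(B)\twoheadrightarrow HH^1_B(B/\pi^mB)$ coming from the construction recalled above (see also \cite[\S 3]{Linder}), and to show that this surjection is injective; the final assertion of the theorem will then be exactly Corollary \ref{star-lift-Cor1}. First one notes that the surjection is available because $p\in\pi^m\CO$: from $p=\prod_{j=1}^{p-1}(1-\tau_p^j)$ and the fact that each factor $1-\tau_p^j=(1-\tau_p)(1+\tau_p+\cdots+\tau_p^{j-1})$ has a unit as its second factor, one gets $p\CO=\pi^{m(p-1)}\CO\subseteq\pi^m\CO$, so $p$ annihilates $HH^1(B/\pi^mB)$ and the rule sending the class of $\alpha\in\Aut_m(B)$ to the class of the derivation that $\alpha$ induces on $B/\pi^mB$ is a well-defined surjective homomorphism $\Out_m(B)\to HH^1_B(B/\pi^mB)$. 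Since $\Out_m(B)$ is finite elementary abelian by Corollary \ref{star-lift-Cor1}, and since by Theorem \ref{star-lift}(ii) every element of $\Out_m(B)$ equals $\Phi(\zeta)$ for a unique $\zeta\in\Hom(P/\foc(\CF),\mu)$, injectivity comes down to showing that, for $\zeta\neq1$, the derivation on $B/\pi^mB$ induced by $\Phi(\zeta)$ is not inner.

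To prove this I would pass to the source algebra. By Theorem \ref{star-lift} --- part (ii) together with the results on source-algebra automorphisms of \S\ref{autSection} guaranteeing that a representative can be chosen in $\Aut_m(B)$ --- the class $\Phi(\zeta)$ is represented by $\alpha_\zeta\in\Aut_m(B)$ with $\alpha_\zeta(i)=i$ and $\alpha_\zeta(ui)=\zeta(u)ui$ for all $u\in P$. Restricting to $iBi$ and using Morita invariance of Hochschild cohomology, $\Phi(\zeta)$ corresponds to a derivation $d_\zeta$ of the reduced source algebra $\bar A=iBi/\pi^m(iBi)$ whose restriction to the subalgebra $\overline{\CO P}$ (the image of the canonical unital embedding $\CO P\hookrightarrow iBi$, $u\mapsto ui$) is $\overline{ui}\mapsto\lambda_\zeta(u)\,\overline{ui}$, where $\lambda_\zeta(u)$ is the class of $(\zeta(u)-1)\pi^{-m}$ in $\CO/\pi^m\CO$. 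Expanding $\zeta(u)\zeta(v)-1$ shows $\lambda_\zeta$ is a group homomorphism $P\to(\CO/\pi^m\CO,+)$ that is trivial on $\foc(\CF)$; and, again by the unit argument above, $\zeta(u)-1$ generates $\pi^m\CO$ whenever $\zeta(u)\neq1$, so $\lambda_\zeta\neq0$ precisely when $\zeta\neq1$.

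It remains to show that $d_\zeta$ is not inner when $\zeta\neq1$; this is the heart of the argument. By Puig's structure theory of source algebras, $\CO P$ is a direct summand of $iBi$ as an $\CO P$-$\CO P$-bimodule, so reducing modulo $\pi^m$ gives a bimodule decomposition $\bar A=\overline{\CO P}\oplus C$. If $d_\zeta$ were inner, say $d_\zeta=[\bar c,-]$, write $\bar c=\bar c_0+\bar c_1$ accordingly; since $d_\zeta$ preserves $\overline{\CO P}$ while $[\bar c_1,-]$ maps $\overline{\CO P}$ into $C$, the restriction $d_\zeta|_{\overline{\CO P}}$ would equal the inner derivation $[\bar c_0,-]$ of $\overline{\CO P}$. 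But $d_\zeta|_{\overline{\CO P}}$ is $\overline{ui}\mapsto\lambda_\zeta(u)\,\overline{ui}$ with $\lambda_\zeta$ trivial on $\foc(\CF)$, so it descends to a derivation of the commutative algebra $(\CO/\pi^m\CO)[P/\foc(\CF)]$ which is non-zero because $\lambda_\zeta\neq0$, and hence cannot be inner --- a contradiction. Therefore $d_\zeta$ is not inner, the surjection $\Out_m(B)\to HH^1_B(B/\pi^mB)$ is an isomorphism, and the ``in particular'' is Corollary \ref{star-lift-Cor1}. The main obstacle is this non-innerness step, and specifically the use of Puig's bimodule splitting to reduce it to a transparent computation over the abelian group $P/\foc(\CF)$; an alternative is to invoke the results of \S\ref{autSection}, in which the kernel of the surjection is identified with the image of $\Out_{2m}(B)$ in $\Out_m(B)$ and this is shown to be trivial using Weiss' theorem \cite[Theorem 3]{Weiss} and the arguments of Hertweck and Kimmerle \cite{HeKi}.
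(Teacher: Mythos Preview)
Your proposal is correct. Note that the paper does not spell out a separate proof of this theorem; it is meant to follow at once from Corollary~\ref{star-lift-Cor1} together with the general facts about the canonical surjection $\Out_m(B)\to HH^1_B(B/\pi^mB)$ recalled just before the statement (and developed in \cite[\S 3]{Linder}). The intended argument is precisely what you list as your ``alternative'': the kernel of that surjection consists of the classes in $\Out_m(B)$ admitting a representative in $\Aut_{2m}(B)$, and since $2m\geq m+1$ this kernel is contained in $\Out_{m+1}(B)$, which is trivial by \cite[3.13]{HeKi} (this is exactly the fact invoked at the end of the proof of Theorem~\ref{star-lift}(ii)). One small correction: this identification of the kernel is not carried out in \S\ref{autSection}; it is a general property of the construction from \cite{Linder}, combined with the Hertweck--Kimmerle input cited in \S4.

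Your main argument---reducing to the source algebra, using the $\OP$-$\OP$-bimodule splitting $A=\OP\oplus C$ to force an inner $d_\zeta$ to restrict to an inner derivation of $\overline{\OP}$, and then descending to the commutative algebra $(\CO/\pi^m\CO)[P/\foc(\CF)]$---is a genuinely different route. It is more explicit and more self-contained in that it avoids a second appeal to \cite[3.13]{HeKi} at this stage (though of course you still use it indirectly through Theorem~\ref{star-lift}(ii)). The cost is the extra care needed to ensure that a representative of $\Phi(\zeta)$ can be chosen simultaneously in $\Aut_m(B)$, fixing $i$, and acting as $ui\mapsto\zeta(u)ui$; this is available via the hyperfocal-subalgebra description in the proof of Lemma~\ref{zeta-extend1} (where one sees directly that the automorphism lies in $\Aut_m(A)$ when $\zeta$ takes values in $\mu$), together with the level-$m$ version of Lemma~\ref{AextendB}. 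The passage from non-innerness on $\bar A$ to non-innerness on $\bar B$ is handled by compressing by $\bar i$, which you use implicitly; it would be worth making that step explicit.
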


\begin{Remark}
The group homomorphism $\Phi$ in Theorem \ref{star-lift} depends on
the choice of $P$ and $i$, but it is easy to describe the impact on 
$\Phi$ for a different choice. By \cite[Theorem 1.2]{Puigpoint}, if $P'$ 
is another defect group of $B$ and $i'\in$ $B^{P'}$ a source idempotent, 
then there is $x\in$ $G$ such that ${^xP}=$ $P'$ and such that ${^xi}$ 
belongs to the same local point of $P'$ on $B$ as $i'$. Thus we may 
assume that $i'=$ ${^xi}$. Conjugation by $x$ sends the fusion system 
$\CF$ on $P$ determined by $i$ to the fusion system $\CF'$ on $P'$ 
determined by the choice of $i'$, hence induces group isomorphisms 
$\foc(\CF)\cong$ $\foc(\CF')$ and $P/\foc(\CF)\cong$ $P'/\foc(\CF')$. 
This in turn induces a group isomorphism 
$\nu : \Hom(P'/\foc(\CF'),\CO^\times)\cong$ 
$\Hom(P/\foc(\CF), \CO^\times)$. Precomposing $\Phi$ with $\nu$ yields 
the group homomorphism $\Phi'$ as in Theorem \ref{star-lift} for $P'$, 
$i'$, $\CF'$ instead of $P$, $i$, $\CF$, respectively. Indeed, if an 
automorphism $\beta$ of $B$ in $\Aut_1(B)$ sends $ui$ to $\zeta(u)ui$, 
then conjugating $\beta$ by $x$ yields an automorphism $\beta'$ in 
$\Aut_1(B)$ belonging to the same class as $\beta$ in $\Out_1(B)$, and 
by construction, $\beta'$ sends $u'i'$ to $\zeta'(u')u'i'$, where 
$u'\in$ $P'$ and $\zeta'$ corresponds to $\zeta$ via $\nu$. 
\end{Remark}

\section{Background material} \label{background}

\begin{paragr} \label{fus-para}
The terminology on fusion systems required in this paper can be found 
in \cite[Part I]{AKO}. By a fusion system we always mean a saturated 
fusion system in the sense of \cite[I.2.2]{AKO}. For a broader treatment 
on fusion systems, see \cite{CravenBook}. Given a fusion system $\CF$ on 
a finite $p$-group $P$, the {\it focal subgroup of $\CF$ in $P$} is the 
subgroup, denoted $\foc(\CF)$, generated by all elements of the form 
$u^{-1}\varphi(u)$, where $u$ is an element of a subgroup $Q$ of $P$ and 
where $\varphi\in$ $\Aut_\CF(Q)$. The focal subgroup $\foc(\CF)$ is
normal in $P$ and contains the derived subgroup $P'$ of $P$; thus
$P/\foc(\CF)$ is abelian. The focal subgroup contains the {\it 
hyperfocal subgroup} $\hyp(\CF)$ generated by all elements 
$u^{-1}\varphi(u)u^{-1}$ as above with the additional condition that 
$\varphi$ has $p'$-order. We have $\foc(\CF)=$ $\hyp(\CF) P'$. Both 
$\foc(\CF)$ and $\hyp(\CF)$ are not only normal in $P$ but in fact 
stable under $\Aut_\CF(P)$. A subgroup $Q$ of $P$ is called 
{\it $\CF$-centric} if for any $\varphi\in$ $\Hom_\CF(Q,P)$ we have
$C_P(\varphi(Q))=$ $Z(\varphi(Q))$. As a consequence of Alperin's fusion 
theorem, $\foc(\CF)$ is generated by all elements of the form 
$u^{-1}\varphi(u)$, where $u$ is an element of an $\CF$-centric subgroup 
$Q$ of $P$ and where $\varphi\in$ $\Aut_\CF(Q)$. See \cite[I, \S 7]{AKO} 
for more details on focal and hyperfocal subgroups. 
\end{paragr}

\begin{paragr} \label{source-para}
We describe in this paragraph the definition and properties of source 
algebras which we will need in this paper. For introductions to some of 
the required block theoretic background material, see for instance 
\cite{Thev} and \cite[Part IV]{AKO}. 

Given a finite group $G$, a block of $\OG$ is an indecomposable direct 
factor $B$ of $\OG$ as an $\CO$-algebra. The unit element $b=$ $1_B$ of 
$B$ is then a primitive idempotent in $Z(\OG)$, called the {\it block 
idempotent of $B$}. For $P$ a $p$-subgroup of
$G$ we denote by $\Br_P : (\OG)^P \to$ $kC_G(P)$ the {\it Brauer
homomorphism} induced by the map sending $x\in$ $C_G(P)$ to its 
image in $kC_G(P)$ and sending $x\in$ $G\setminus C_G(P)$ to zero.
This is a surjective algebra homomorphism. Thus $\Br_P(b)$ is
either zero, or an idempotent in $kC_G(P)^{N_G(P)}$. 
If $P$ is maximal subject to the condition $\Br_P(b)\neq$ $0$,
then $P$ is called a {\it defect group} of $B$. The defect groups
of $B$ are conjugate in $G$. 

The condition $\Br_P(b)\neq$ $0$ implies that there is a primitive 
idempotent $i$ in $B^P$ such that $\Br_P(i)\neq$ $0$. The idempotent $i$ 
is then called a {\it source idempotent of} $B$ and the algebra 
$A=$ $iBi=$ $i\OG i$ is then called a {\it source algebra of} $B$. We 
view $A$ as an {\it interior $P$-algebra}; that is, we keep track of the 
image $iP$ of $P$ in $A$ via the group homomorphism $P\to$ 
$A^\times$ sending $u\in$ $P$ to $ui=iu=iui$. This group 
homomorphism is injective and induces an injective algebra homomorphism
$\OP$ which has a complement as an $\OP$-$\OP$-bimodule, because $A$ 
is projective as a left or right $\OP$-module. As an 
$\CO(P\times P)$-module, $A$ is a direct summand of $\OG$, and hence
$i\OG i$ is a permutation $\CO(P\times P)$-module. The isomorphism class 
of $A$ as an interior $P$-algebra is unique up to conjugation by 
elements in $N_G(P)$. By \cite[3.6]{Puigpoint} the source algebra $A$ 
and the block algebra $B$ are Morita equivalent via the bimodules $Bi=$ 
$\OG i$ and $iB=$ $i\OG$. This Morita equivalence induces an isomorphism 
$\Out_1(A)\cong$ $\Out_1(B)$; see Lemma \ref{AextendB} below for a more
precise statement. The strategy to prove Theorem \ref{star-lift} is 
to construct a group homomorphism $\Hom(P/\foc(\CF))\to$ $\Out_1(A)$ and 
then show that its composition with the isomorphism $\Out_1(A)\cong$ 
$\Out_1(B)$ satisfies the conclusions of Theorem \ref{star-lift}.

It follows from work of Alperin and Brou\'e \cite{AlBr} that $B$ 
determines a fusion system on any defect group $P$, uniquely up to 
conjugation. By work of Puig \cite{Puigloc}, every choice of a source 
algebra $A$ determines a fusion system $\CF$ on $P$. 
More precisely, the fusion system $\CF$ is determined by the 
$\OP$-$\OP$-bimodule structure of $A$: every indecomposable direct
summand of $A$ as an $\OP$-$\OP$-bimodule is isomorphic to
$\OP_\varphi \tenOQ \OP$ for some $\varphi\in$ $\Hom_\CF(Q,P)$, and the
morphisms in $\CF$ which arise in this way generate $\CF$. Here 
$\OP_\varphi$ is the $\OP$-$\OQ$-bimodule which is equal to $\OP$ as
a left $\OP$-module, and on which $u\in$ $Q$ acts on the right by
multiplication with $\varphi(u)$.  See \cite[\S 7]{Lisplendid} for an 
expository account of this material. 
Fusion systems on a defect group $P$ of $B$ obtained from different 
choices of source idempotents are $N_G(P)$-conjugate.

By \cite[Theorem 1.8]{Puighyp}, the source algebra $A$ has, up to 
conjugation by $P$-stable invertible elements in $A$, a unique unitary 
$P$-stable subalgebra $D$, called {\it hyperfocal subalgebra of 
$i\OG i$}, such that $D\cap Pi=$ $\hyp(\CF)i$ and such that $A =$ 
$\oplus_{u} Du$, with $u$ running over a set of representatives in $P$ 
of $P/\hyp(\CF)$.
\end{paragr}

\begin{paragr} \label{autom-para}
Let $A$ be an $\CO$-algebra which is free of finite rank as an 
$\CO$-module. In what follows the use of automorphisms as subscripts 
to modules is as in \cite{Listable}. That is, if $\alpha\in$ $\Aut(A)$ 
we denote for any $A$-module $U$ by ${_\alpha{U}}$ the $A$-module which 
is equal to $U$ as an $\CO$-module, with $a\in$ $A$ acting as 
$\alpha(a)$ on $U$. If $\alpha$ is inner, then ${_\alpha{U}}\cong$
$U$. We use the analogous notation for right modules and bimodules. If 
$U$ and $V$ are $A$-$A$-bimodules and $\alpha\in$ $\Aut(A)$, then we 
have an obvious isomorphism of $A$-$A$-bimodules
$(U_{\alpha})\tenA V\cong$ $U\tenA ({_{\alpha^{-1}}{V}})$.
We need the following standard fact (we sketch a proof for the
convenience of the reader).

\begin{Lemma} \label{autom-extend}
Let $A$ be an $\CO$-algebra and $B$ a subalgebra of $A$. Let
$\alpha\in$ $\Aut(A)$ and let $\beta : B\to$ $A$ be an $\CO$-algebra
homomorphism. The following are equivalent.

\smallskip\noindent (i)
There is an automorphism $\alpha'$ of $A$ which extends the map $\beta$ 
such that $\alpha$ and $\alpha'$ have the same image in $\Out(A)$.

\smallskip\noindent (ii)
There is an isomorphism of $A$-$B$-bimodules ${A_\beta }\cong$ 
${A_\alpha }$.

\smallskip\noindent (iii)
There is an isomorphism of $B$-$A$-bimodules ${_\beta A}\cong$ 
${_\alpha A}$.
\end{Lemma}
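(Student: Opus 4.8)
The plan is to prove the three conditions equivalent by a cyclic argument, exploiting the standard dictionary between algebra homomorphisms and bimodules. The key observation throughout is that for an $\CO$-algebra homomorphism $\gamma : B \to A$, the $\CO$-module $A$ becomes an $A$-$B$-bimodule ${A_\gamma}$ (with $A$ acting on the left by multiplication and $b \in B$ acting on the right by multiplication with $\gamma(b)$) and likewise a $B$-$A$-bimodule ${_\gamma A}$; when $\gamma$ is the restriction to $B$ of an automorphism of $A$, these are the bimodules obtained by twisting the regular bimodule ${_A A_A}$ on one side. So conditions (ii) and (iii) are instances of twisted-bimodule isomorphisms, and condition (i) is the assertion that $\beta$ extends to a representative of the class of $\alpha$ in $\Out(A)$.

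First I would prove (i)$\Rightarrow$(ii) and (i)$\Rightarrow$(iii): if $\alpha'$ extends $\beta$ and equals $\alpha$ in $\Out(A)$, then $\alpha' = \alpha \circ c$ for an inner automorphism $c = \mathrm{conj}_a$ with $a \in A^\times$. One checks directly that right multiplication by $a$ (or left multiplication, in the other case) gives an $A$-$B$-bimodule isomorphism ${A_\beta} = {A_{\alpha'}} \cong {A_\alpha}$, using that $\alpha'(b) = \alpha(a b a^{-1})$ and $\alpha|_B$ need not be defined — here one must be slightly careful: ${A_{\alpha'}}$ means $A$ with the left regular $A$-action and right $B$-action $x \cdot b = x\,\alpha'(b)$, and we want to compare with ${A_\alpha}$, which is $A$ with right $B$-action $x \cdot b = x\,\alpha(b)$; but $\alpha(b)$ only makes sense if $\alpha$ restricts to $B$, which it does not in general. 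The correct formulation is that ${A_\alpha}$ here should be read as the $A$-$B$-bimodule that is ${_A A_A}$ twisted by $\alpha$ on the right and then restricted along $B \hookrightarrow A$; with that reading the map $x \mapsto xa$ does the job. So I would be explicit that the subscript notation in (ii) and (iii) refers to restriction-along-$B$ of the $\alpha$-twist of the regular bimodule.

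Next, the heart of the argument is the converse, say (ii)$\Rightarrow$(i) (and symmetrically (iii)$\Rightarrow$(i)). Given an $A$-$B$-bimodule isomorphism $f : {A_\beta} \to {A_\alpha}$, set $a = f(1)$. Since $f$ is a left $A$-module homomorphism, $f(x) = xa$ for all $x \in A$, so $f$ is right multiplication by $a$; since $f$ is bijective, $a \in A^\times$. The right $B$-linearity of $f$ says $f(1 \cdot b) = f(1) \cdot b$, i.e. $\beta(b)\,a = a\,\alpha(b)$ — here again the right-hand $\alpha(b)$ means the image under the (extended, regular-bimodule) $\alpha$ of the element $b$ viewed in $A$ — hence $\beta(b) = a\,\alpha(b)\,a^{-1}$ for all $b \in B$. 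Now define $\alpha' = c_a \circ \alpha$ where $c_a(y) = a y a^{-1}$; then $\alpha'$ is an automorphism of $A$, it has the same image as $\alpha$ in $\Out(A)$, and for $b \in B$ we get $\alpha'(b) = a\,\alpha(b)\,a^{-1} = \beta(b)$, so $\alpha'$ extends $\beta$. This gives (i). The argument for (iii)$\Rightarrow$(i) is identical with left and right swapped, producing an inner twist on the other side.

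The main obstacle — really a bookkeeping point rather than a deep one — is keeping the restriction/twisting conventions straight: the symbol ${A_\alpha}$ in (ii) and (iii) cannot literally mean "$A$ with right $B$-action through $\alpha|_B$" because $\alpha$ is not assumed to stabilize $B$, so one must fix once and for all that it denotes the regular $A$-$A$-bimodule twisted by $\alpha$ on one side and then viewed as an $A$-$B$- (resp. $B$-$A$-) bimodule via the inclusion $B \hookrightarrow A$. Once that is pinned down, each implication is a two-line computation of the form "a bimodule isomorphism out of a twisted regular module is multiplication by a unit, and the intertwining condition is exactly the conjugation relation." I would also remark that freeness of $A$ over $\CO$ is not actually needed for this lemma; it is a blanket hypothesis in the surrounding section.
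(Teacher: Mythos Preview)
Your proof is correct and follows essentially the same approach as the paper's: an $A$-$B$-bimodule isomorphism $A_\beta \cong A_\alpha$ is, as a left $A$-module map, right multiplication by a unit $c\in A^\times$, and the right $B$-compatibility yields $\beta(b)c = c\alpha(b)$, so $\alpha'(a)=c\alpha(a)c^{-1}$ extends $\beta$ and lies in the class of $\alpha$. Your concern about $\alpha(b)$ being undefined is unnecessary---since $B\subseteq A$ and $\alpha:A\to A$, the element $\alpha(b)\in A$ is perfectly well-defined (what may fail is $\alpha(B)\subseteq B$, but that is never needed), so no reinterpretation of the subscript notation is required.
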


\begin{proof}
Clearly (i) implies (ii) and (iii). Suppose that (ii) holds.
An $A$-$B$-bimodule isomorphism
$\Phi :{A_\beta }\cong$ ${A_\alpha}$ is in particular a left
$A$-module automorphism of $A$, hence induced by right multiplication
with an element $c\in$ $A^\times$. The fact that $\Phi$ is also
a homomorphism of right $B$-modules implies that $\beta(b)c=$ 
$c\alpha(b)$ for all $b\in$ $B$. Thus $\alpha'$ defined by
$\alpha'(a)= c\alpha(a)c^{-1}$ for all $a\in$ $A$ defines an
automorphism of $A$ which extends $\beta$ and whose class in 
$\Out(A)$ coincides with that of $\alpha$. Thus (ii) implies (i).
A similar argument shows that (iii) implies (i).
\end{proof}

A frequently used special case of Lemma \ref{autom-extend} (with $B=A$ 
and $\beta=$ $\Id$) is that $A\cong$ ${A_\alpha}$ as $A$-$A$-bimodules 
if and only if $\alpha$ is inner. Note that besides being an algebra
automorphism, $\alpha$ is also an isomorphism of $A$-$A$-bimodules 
${A_{\alpha^{-1}}}\cong$ ${_{\alpha}A}$. Any $A$-$A$-bimodule of the form
${A_\alpha}$ for some $\alpha\in$ $\Aut(A)$ induces a Morita 
equivalence on $A$, with inverse equivalence induced by 
${A_{\alpha^{-1}}}$. An $A$-$A$-bimodule $M$ which induces a Morita 
equivalence on $\mod(A)$ is of the form ${A_\alpha}$ for some
$\alpha\in$ $\Aut(M)$ if and only if $M\cong$ $A$ as left 
$A$-modules, which is also equivalent to $M\cong$ $A$ as right
$A$-modules. This embeds $\Out(A)$ as a subgroup of $\Pic(A)$.
This embedding identifies $\Out_r(A)$ with the kernel of the canonical 
homomorphism of Picard groups $\Pic(A)\to$ $\Pic(A/\pi^rA)$, where $r$ 
is a positive integer. See e. g. \cite[\S 55 A]{CR2} for more details.
\end{paragr}

\begin{paragr} \label{stable-para}
Let $A$ and $B$ be $\CO$-algebras which are free of finite ranks as
$\CO$-modules. Let $M$ be an $A$-$B$-bimodule such that $M$ is finitely 
generated projective as a left $A$-module and as a right $B$-module. 
Let $N$ be a $B$-$A$-bimodule which is finitely generated as a left
$B$-module and as a right $A$-module. Following Brou\'e \cite{BroueEq}
we say that {\it $M$ and $N$ induce a stable equivalence of Morita type
between $A$ and $B$} if we have isomorphisms $M\tenB N\cong$ $B\oplus Y$
and $N\tenA M\cong$ $A\oplus X$ as $B\tenO B^\op$-modules and
$A\tenO A^\op$-modules, respectively, such that $Y$ is a projective
$B\tenO B^\op$-module and $X$ is a projective $A\tenO A^\op$-module.
\end{paragr}

\begin{Theorem}[{\cite[Theorem 4.2]{Listable}, 
\cite[Lemma 5.2]{Linder}}] \label{outmstable} 
Let $A$, $B$ be $\CO$-algebras which are free of finite rank as 
$\CO$-modules, such that the $k$-algebras  $k\tenO A$ and $k\tenO B$ are 
indecomposable nonsimple selfinjective with separable semisimple 
quotients. Let $r$ be a positive integer. Suppose that the canonical 
maps $Z(A)\to$ $Z(A/\pi^rA)$ and $Z(B)\to$ $Z(B/\pi^rB)$ are surjective. 
Let $M$ be an $A$-$B$-bimodule and $N$ a $B$-$A$-bimodule inducing a 
stable equivalence of Morita type between $A$ and $B$. For any 
$\alpha\in$ $\Aut_r(A)$ there is $\beta\in$ $\Aut_r(B)$ such that 
${_{\alpha^{-1}}{M}}\cong$ $M_\beta$ as $A$-$B$-bimodules, and the 
correspondence $\alpha\mapsto$ $\beta$ induces a group isomorphism 
$\Out_r(A)\cong$ $\Out_r(B)$.
\end{Theorem}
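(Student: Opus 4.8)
The plan is to push the bimodule $A_\alpha$ through the stable equivalence and to recognise the result, modulo a projective bimodule, as $B_\beta$ for a suitable $\beta\in\Aut_r(B)$. I would work inside the group $\StPic(A)$ of isomorphism classes of $A$-$A$-bimodules that are projective on both sides, induce a stable self-equivalence of Morita type of $A$, taken modulo projective bimodule summands, and likewise for $B$. Since $A$ is indecomposable and nonsimple it is not separable, so for $\gamma\in\Aut(A)$ the bimodule $A_\gamma$ is indecomposable non-projective, whence $\gamma\mapsto[A_\gamma]$ embeds $\Out(A)$ into $\StPic(A)$; by \ref{autom-para} this embedding sends $\Out_r(A)$ into the kernel of the reduction map $\StPic(A)\to\StPic(A/\pi^rA)$ (reduction is well defined because the bimodules in sight are projective on both sides, hence flat, so tensor products reduce correctly mod $\pi^r$). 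The pair $(M,N)$ induces an isomorphism $\StPic(A)\cong\StPic(B)$ by $[V]\mapsto[N\tenA V\tenA M]$; since $M$ is projective on both sides, $M/\pi^rM$ and $N/\pi^rN$ induce a stable equivalence of Morita type between $A/\pi^rA$ and $B/\pi^rB$, so this isomorphism is compatible with reduction mod $\pi^r$. Granting the \emph{key claim} below — that for each of $A$ and $B$ the image of $\Out_r$ in $\StPic$ is exactly the kernel of the reduction map — conjugation by $[M]$ matches $\Out_r(A)$ with $\Out_r(B)$ inside the two stable Picard groups, and composing with the inverses of the two embeddings gives the isomorphism $\Out_r(A)\cong\Out_r(B)$. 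Unwinding it on bimodules: for $\alpha\in\Aut_r(A)$ the corresponding $\beta\in\Aut_r(B)$ satisfies $N\tenA{}_{\alpha^{-1}}M\cong B_\beta\oplus Q$ with $Q$ a projective $B$-$B$-bimodule, and tensoring on the left with $M$ over $B$, using $M\tenB N\cong A\oplus X$ with $X$ projective together with the fact that twisting a projective $A$-$B$-bimodule by an automorphism of $A$ or of $B$ changes nothing, yields ${}_{\alpha^{-1}}M\cong M_\beta$. (One may assume $N$ is itself projective on both sides, replacing it if necessary by $\Hom_A(M,A)$, which does not affect the conclusion, since ${}_{\alpha^{-1}}M\cong M_\beta$ involves only $M$.)

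For the key claim, let $W$ be a $B$-$B$-bimodule, projective on both sides, inducing a stable self-equivalence of Morita type of $B$, with $W/\pi^rW\cong B/\pi^rB\oplus Q$ for some projective bimodule $Q$; I must produce $\beta\in\Aut_r(B)$ with $[W]=[B_\beta]$. Because $B$ is indecomposable nonsimple selfinjective, $W$ has a unique indecomposable non-projective summand $W_0$, occurring with multiplicity one, and $W_0$ again induces a stable self-equivalence. The hypothesis that $Z(B)\to Z(B/\pi^rB)$ is surjective, together with indecomposability of $B$, forces $Z(B/\pi^rB)$ to be local, so $B/\pi^rB$ is indecomposable as a module over its enveloping algebra $B/\pi^rB\tenO(B/\pi^rB)^\op$; and it is non-projective there since $B/\pi B$ is not semisimple (as $B$ is indecomposable nonsimple), hence not separable. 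Comparing the two decompositions of $W/\pi^rW$ by Krull--Schmidt over the Artinian enveloping algebra shows that $B/\pi^rB$ is a summand of $W_0/\pi^rW_0$, so $W_0/\pi^rW_0\cong B/\pi^rB\oplus Q_0$ with $Q_0$ projective. One then shows $Q_0=0$ — equivalently that $W_0$ is invertible, a Morita self-equivalence — and this is the step that genuinely uses selfinjectivity, indecomposability and the separable semisimple quotient hypothesis; it is the substance of \cite[Theorem 4.2]{Listable} (see also \cite[Lemma 5.2]{Linder}). Once $W_0/\pi^rW_0\cong B/\pi^rB$, it follows (using that $W_0$, being a summand of the two-sided projective $W$, is projective as a left and as a right $B$-module, so that projectives lift uniquely from $B/\pi B$) that $W_0\cong B$ as a left and as a right $B$-module; hence, by the characterisation recalled after Lemma \ref{autom-extend} in \ref{autom-para}, $W_0\cong B_\beta$ for some $\beta\in\Aut(B)$, and $W_0/\pi^rW_0\cong B/\pi^rB$ as bimodules says precisely that $[B_\beta]$ lies in the kernel of $\Pic(B)\to\Pic(B/\pi^rB)$, i.e. the class of $\beta$ lies in $\Out_r(B)$. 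The reverse inclusion is immediate, proving the key claim; and running the same construction with $(A,M)$ and $(B,N)$ interchanged shows $\alpha\mapsto\beta$ is bijective.

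The step I expect to be the main obstacle is the vanishing of $Q_0$: upgrading the transported stable self-equivalence, which a priori is only known to be trivial modulo $\pi^r$, to a genuine Morita self-equivalence implemented by an automorphism in $\Aut_r(B)$. Everything else is bookkeeping with the Picard and stable Picard groups and the elementary lemmas of Section \ref{background}; it is exactly this invertibility point for which all the structural hypotheses on $A$ and $B$ are needed.
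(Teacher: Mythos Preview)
The paper does not give its own proof; the result is quoted from \cite[Theorem 4.2]{Listable} and \cite[Lemma 5.2]{Linder}, with only the remark that ``the proof is a variation of \cite[Theorem 4.2]{Listable}; details can be found in \cite[Lemma 5.2]{Linder}''. So there is nothing in the paper to compare against directly. Your outline is the expected one and matches the shape of those cited arguments: transport $[A_\alpha]$ through the isomorphism $\StPic(A)\cong\StPic(B)$ induced by $(M,N)$, identify $\Out_r$ on each side with the kernel of the reduction map to the stable Picard group modulo $\pi^r$, and recognise the image as $[B_\beta]$. You have correctly isolated the vanishing of $Q_0$ (equivalently the invertibility of $W_0$) as the one substantive step and the place where the hypotheses of self-injectivity and separable semisimple quotient on $k\tenO A$ and $k\tenO B$ are used.

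Two small points. First, the structural hypotheses are on $k\tenO B$, not on $B$; your phrase ``because $B$ is indecomposable nonsimple selfinjective'' should refer to $k\tenO B$, and the uniqueness of the non-projective indecomposable summand of $W$ is obtained by reducing modulo $\pi$, invoking the $k$-level statement for $k\tenO W$, and lifting back. Second, in the passage from $N\tenA{}_{\alpha^{-1}}M\cong B_\beta\oplus Q$ to ${}_{\alpha^{-1}}M\cong M_\beta$, tensoring with $M\tenB-$ only yields an isomorphism up to projective $A$-$B$-bimodule summands; to obtain the honest bimodule isomorphism you need Krull--Schmidt together with your observation that twisting a projective bimodule by an automorphism in $\Aut_r$ changes nothing, applied not only to the extra projectives arising from $X$ and $Q$ but also to any projective summands of $M$ itself.
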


If $M$ and $N$ induce a Morita equivalence, then the hypothesis on
$k\tenO A$ and $k\tenO B$ being selfinjective with separable semisimple
quotients is not needed; see the \cite[Remark 5.4]{Linder} for the 
necessary adjustments. 
The proof is a variation of \cite[Theorem 4.2]{Listable}; details
can be found in \cite[Lemma 5.2]{Linder}. The surjectivity hypothesis
for the map $Z(A)\to$ $Z(A/\pi^rA)$ ensures that two automorphisms
in $\Aut_r(A)$ represent the same class in $\Out(A)$ if and only if
they differ by conjugation with an element in $1+\pi^rA$; equivalently,
we have $\Inn_r(A)=$ $\Inn(A)\cap \Aut_1(A)$, where $\Inn_r(A)$ is the
subgroup of $\Inn(A)$ consisting of automorphisms given by conjugation
with elements in $1+\pi^rA$; this follows from \cite[3.2]{Linder}. 
We will use this fact without further reference.

Given two finite groups $G$, $H$, we consider any $\OG$-$\OH$-bimodule 
$M$ as an $\CO(G\times H)$-module via $(x,y)\cdot m=$ $xmy^{-1}$, where
$x\in$ $G$, $y\in$ $H$, $m\in$ $M$. The easy proof of the following
well-known Lemma is left to the reader.

\begin{Lemma} \label{zetaeta}
Let $G$ be a finite group and $\zeta : G\to$ $\CO^\times$ a group
homomorphism. Denote by $\CO_\zeta$ the $\OG$-module which is equal
to $\CO$ as an $\CO$-module and on which any $x\in$ $G$ acts as
multiplication by $\zeta(x)$. Set $\Delta G=$ $\{(x,x)\ |\ x\in G\}$
and consider $\CO_\zeta$ as a module over $\CO\Delta G$ via the
canonical group isomorphism $\Delta G\cong$ $G$.

\smallskip\noindent (i)
The $\CO$-linear endomorphism $\eta$ of $\OG$ defined by $\eta(x)=$
$\zeta(x)x$ for all $x\in$ $G$ is an $\CO$-algebra automorphism of
$\OG$, and the map $\zeta\mapsto\eta$ induces an injective group
homomorphism $\Hom(G,\CO^\times)\to$ $\Out(\OG)$. 

\smallskip\noindent (ii)
There is an isomorphism of $\CO(G\times G)$-modules
${_\eta{\OG}}\cong$ $\Ind^{G\times G}_{\Delta G}(\CO_\zeta)$
which sends $x\in$ $G$ to $\zeta(x^{-1})(x,1)\ten 1$. 
\end{Lemma}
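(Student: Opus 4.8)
The statement is elementary and both parts reduce to computations on the group basis of $\OG$; the plan is to carry these out directly, and the only thing that requires care is keeping the various conventions consistent, so I do not expect a genuine obstacle. Throughout I would use that $\zeta$, being a homomorphism into the abelian group $\CO^\times$, satisfies $\zeta(x^{-1}) = \zeta(x)^{-1}$ and is constant on conjugacy classes.

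For part (i), I would first check that $\eta = \eta_\zeta$ is an $\CO$-algebra endomorphism of $\OG$: it is $\CO$-linear by construction, sends $1$ to $1$, and is multiplicative on the basis since $\eta(x)\eta(y) = \zeta(x)\zeta(y)\,xy = \zeta(xy)\,xy = \eta(xy)$. It is bijective because the endomorphism attached to $\zeta^{-1}$ by the same recipe, namely $x\mapsto \zeta(x)^{-1}x$, is a two-sided inverse on the basis; hence $\eta_\zeta\in\Aut(\OG)$. The identity $\eta_\zeta(\eta_{\zeta'}(x)) = \zeta(x)\zeta'(x)\,x = \eta_{\zeta\zeta'}(x)$ then shows that $\zeta\mapsto\eta_\zeta$ is a group homomorphism $\Hom(G,\CO^\times)\to\Aut(\OG)$, and composing with the canonical surjection $\Aut(\OG)\to\Out(\OG)$ yields the asserted homomorphism to $\Out(\OG)$. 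For injectivity I would invoke the general fact recalled in \ref{autom-para}, that ${_\alpha U}\cong U$ for every module $U$ when $\alpha$ is inner: if $\eta_\zeta$ were inner, then applying this to the trivial $\OG$-module gives $\CO_\zeta = {_{\eta_\zeta}\CO}\cong\CO$ as $\OG$-modules (on ${_{\eta_\zeta}\CO}$ the element $x$ acts as $\eta_\zeta(x) = \zeta(x)x$ does on the trivial module, that is, as multiplication by $\zeta(x)$), and comparing the $G$-action on a free $\CO$-basis vector forces $\zeta(x)=1$ for all $x\in G$.

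For part (ii), I would exhibit the stated map and check it is a bimodule isomorphism. First note that $\Ind_{\Delta G}^{G\times G}(\CO_\zeta) = \CO(G\times G)\otimes_{\CO\Delta G}\CO_\zeta$ has $\CO$-basis $\{(x,1)\ten 1 \mid x\in G\}$, since $\{(x,1)\mid x\in G\}$ is a transversal of $\Delta G$ in $G\times G$ and $\CO_\zeta$ is $\CO$-free of rank one; and recall that on the bimodule ${_\eta\OG}$ the element $(g,h)\in G\times G$ acts by $m\mapsto\eta(g)\,m\,h^{-1}$, so $(g,h)\cdot x = \zeta(g)\,gxh^{-1}$ for $x\in G$. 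Define $\psi : {_\eta\OG}\to\Ind_{\Delta G}^{G\times G}(\CO_\zeta)$ to be the $\CO$-linear map with $\psi(x) = \zeta(x^{-1})\,(x,1)\ten 1$ for $x\in G$; it is an $\CO$-isomorphism because it carries the basis $\{x\}$ bijectively onto unit scalar multiples of the above basis. Equivariance is then the computation
$$(g,h)\cdot\psi(x)\ =\ \zeta(x^{-1})\,(gx,h)\ten 1\ =\ \zeta(x^{-1})\zeta(h)\,(gxh^{-1},1)\ten 1\ =\ \psi\big(\zeta(g)\,gxh^{-1}\big)\ =\ \psi\big((g,h)\cdot x\big),$$
where the second equality uses $(gx,h) = (gxh^{-1},1)(h,h)$ together with the fact that $(h,h)$ acts on $\CO_\zeta$ as $\zeta(h)$, and the third uses $\zeta(g)\,\zeta\big((gxh^{-1})^{-1}\big) = \zeta(x^{-1})\zeta(h)$. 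This gives the desired $\CO(G\times G)$-isomorphism, sending $x$ to $\zeta(x^{-1})(x,1)\ten 1$.

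The closest thing to a pitfall — rather than an obstacle — is making sure all conventions are pinned down compatibly before starting: on which side $\eta$ twists the bimodule $\OG$, the identification of a bimodule with an $\CO(G\times G)$-module via $(g,h)\cdot m = gmh^{-1}$, the chosen transversal $\{(x,1)\}$ of $\Delta G$, and the identification $\Delta G\cong G$ used to view $\CO_\zeta$ as a $\CO\Delta G$-module. Once these are fixed, the scalar factors $\zeta(x^{\pm1})$ in $\psi$ are forced, and everything matches the statement.
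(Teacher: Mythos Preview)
Your proof is correct and complete; all the computations check out, and the injectivity argument via twisting the trivial module is a clean way to see it. The paper itself omits the proof (``The easy proof of the following well-known Lemma is left to the reader''), so there is no approach to compare against.
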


\section{On automorphisms of source algebras}
\label{autSection}

\begin{Theorem} \label{PfocAut}
Let $G$ be a finite group and $B$ a block of $\OG$ with a nontrivial 
defect group. Let $i$ be a source idempotent in $B^P$ and $A=$ $iBi$ the 
corresponding source algebra of $B$ with associated fusion system $\CF$ 
on $P$. Assume that $\CO$ contains a primitive $|G|$-th root of unity. 
Identify $\OP$ to its image in $A$. Let $\zeta\in$ $\Hom(P,\CO^\times)$. 

There is $\alpha\in$ $\Aut_1(A)$ satisfying $\alpha(u)=$ $\zeta(u)u$ for
all $u\in$ $P$ if and only if $\foc(\CF)\leq$ $\ker(\zeta)$. In that
case the class of $\alpha$ in $\Out_1(A)$ is uniquely determined by
$\zeta$, and the correspondence $\zeta\mapsto\alpha$ induces an
injective group homomorphism $\Psi : \Hom(P/\foc(\CF),\CO^\times)\to$
$\Out_1(A)$.
\end{Theorem}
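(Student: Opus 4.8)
The plan is to analyze when an $\CO$-algebra automorphism of $A$ can be prescribed on the image $\OP$ of $P$ in $A$ by the rule $u\mapsto\zeta(u)u$, and to do so by passing to the $\OP$-$\OP$-bimodule structure of $A$. First I would observe that $A$, as an $\OP$-$\OP$-bimodule, decomposes as a direct sum of indecomposable summands, each isomorphic to $\OP_\varphi\tenOQ\OP$ for various $\varphi\in\Hom_\CF(Q,P)$, and that the set of morphisms $\varphi$ occurring in this way generates $\CF$ (as recalled in \S\ref{source-para}). The key bimodule observation is that if $\eta$ is the $\CO$-linear endomorphism of $A$ which is the identity off $\OP$ and sends $ui$ to $\zeta(u)ui$ for $u\in P$ — more precisely, I want to realize the twist via the module $\CO_\zeta$ as in Lemma \ref{zetaeta} — then twisting $A$ on one side by $\zeta$ produces an $\OP$-$\OP$-bimodule whose indecomposable summands are $\OP_\varphi\tenOQ\OP$ twisted appropriately, and the twisted summand is isomorphic to the original one precisely when $\zeta$ is trivial on all elements $u^{-1}\varphi(u)$, $u\in Q$. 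Since these elements generate $\foc(\CF)$, the condition for $A$ twisted by $\zeta$ to be isomorphic to $A$ as an $\OP$-$\OP$-bimodule is exactly $\foc(\CF)\leq\ker(\zeta)$.

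Next I would convert this bimodule isomorphism into the existence of the algebra automorphism $\alpha$. Here I would invoke Lemma \ref{autom-extend} with $B=\OP$, taking $\beta:\OP\to A$ to be the composite of the given embedding with the group automorphism $u\mapsto\zeta(u)u$ of $\OP$ (which makes sense because $\zeta$ takes values in $\CO^\times$), and $\alpha=\Id_A$: then an isomorphism ${A_\beta}\cong{A_{\Id}}=A$ of $A$-$\OP$-bimodules yields an automorphism $\alpha'$ of $A$ extending $\beta$, i.e.\ with $\alpha'(ui)=\zeta(u)ui$, and whose class in $\Out(A)$ is trivial; in particular $\alpha'\in\Aut_1(A)$ since $\zeta(u)\equiv 1$ modulo $J(\CO)$ (as $\CO^\times$ has a unique subgroup of order prime to $p$ up to roots of unity, or more simply since the image of $\zeta$ is a finite subgroup of $1+J(\CO)$ times a $p'$-part, and $P$ is a $p$-group so $\zeta(P)\subseteq 1+J(\CO)$... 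I should state this cleanly). The point is that the existence of a suitable $\OP$-$\OP$-bimodule isomorphism $A\otimes_{\OP}\CO_\zeta\cong A$ (equivalently, on the other side) is equivalent to the existence of $\alpha\in\Aut_1(A)$ with $\alpha(u)=\zeta(u)u$, and this in turn is equivalent to $\foc(\CF)\leq\ker(\zeta)$ by the previous paragraph.

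For uniqueness of the class of $\alpha$ in $\Out_1(A)$: if $\alpha,\alpha'\in\Aut_1(A)$ both satisfy $\alpha(u)=\alpha'(u)=\zeta(u)u$ for all $u\in P$, then $\alpha^{-1}\alpha'$ is an automorphism in $\Aut_1(A)$ fixing $\OP$ pointwise, hence an $\OP$-$\OP$-bimodule automorphism of $A$; I would argue such an automorphism is inner. The standard fact is that $\End_{\OP\tenO\OP^{\op}}(A)$ is local enough — more precisely, since $A$ as an $\OP$-$\OP$-bimodule has the trivial bimodule $\OP$ as a summand with multiplicity one (the summand $\OP_{\Id}\tenO\OP$, i.e.\ the identity component corresponding to $Q=P$, $\varphi=\Id$, coming from $1\cdot i\cdot P$) and because $A^P$ is a source of the trivial module-type situation, an $\OP$-$\OP$-bimodule automorphism of $A$ that is the identity modulo $J(\CO)A$ lies in $1+J(\CO)\End_{\OP\tenO\OP^{\op}}(A)$; combined with the fact that any $\OP$-$\OP$-bimodule endomorphism of $A$ which restricts to the identity on the $\OP$-summand is, up to adding a nilpotent/radical term, given by right multiplication by a $P$-fixed unit, one gets that $\alpha^{-1}\alpha'$ is conjugation by an element of $(A^P)^\times\cap(1+J(\CO)A)$, hence inner and in $\Inn_1(A)$. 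This shows the class in $\Out_1(A)$ depends only on $\zeta$.

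Finally, having a well-defined map $\zeta\mapsto[\alpha]$ from $\{\zeta:\foc(\CF)\leq\ker\zeta\}=\Hom(P/\foc(\CF),\CO^\times)$ to $\Out_1(A)$, I would check it is a group homomorphism and injective. Multiplicativity is immediate: if $\alpha,\alpha'$ represent $\Psi(\zeta),\Psi(\zeta')$, then $\alpha\circ\alpha'$ sends $u$ to $\alpha(\zeta'(u)u)=\zeta'(u)\zeta(u)u=(\zeta\zeta')(u)u$, so $\alpha\alpha'$ represents $\Psi(\zeta\zeta')$. For injectivity, suppose $\Psi(\zeta)=1$, i.e.\ there is an inner automorphism $c\mapsto aca^{-1}$ (with $a\in A^\times\cap(1+J(\CO)A)$, after adjusting) sending $u$ to $\zeta(u)u$ for all $u\in P$; then $a$ centralizes... no, rather $aua^{-1}=\zeta(u)u$, so $\zeta(u)=aua^{-1}u^{-1}\in 1+J(\CO)A$, but also $\zeta(u)\in\CO^\times$ is a $|P|$-th root of unity, and the only such root of unity congruent to $1$ modulo $J(\CO)A\cap\CO=J(\CO)$ is $1$ (since $\CO$ contains a primitive $|P|$-th root of unity and reduction modulo $J(\CO)$ is injective on the $p'$-roots and $P$ is a $p$-group so the relevant roots are of $p$-power order, forcing them congruent to $1$ forces them equal to $1$ — here I'd be careful and just say: the image of $\Psi(\zeta)$ on $P$ being inner forces $\zeta$ to factor through units of $1+J(\CO)$, and a short computation with the explicit bimodule decomposition shows $\zeta=1$). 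The main obstacle I anticipate is the uniqueness/injectivity step: pinning down that an $\OP$-$\OP$-bimodule automorphism of $A$ which is the identity on the diagonal $\OP$-summand and trivial mod $\pi$ is necessarily inner (by conjugation with a $P$-fixed unit in $1+\pi A$) requires care with the structure of $\End_{\OP\tenO\OP^{\op}}(A)$ and its radical; the bimodule-decomposition argument for the "existence" equivalence is comparatively routine once Lemma \ref{zetaeta} and Lemma \ref{autom-extend} are in hand.
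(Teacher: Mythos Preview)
The existence direction contains a genuine gap. You correctly observe that the $\OP$-$\OP$-bimodule structure of $A$ is governed by summands $\OP_\varphi\tenOQ\OP$ and that two-sided twisting by $\zeta$ returns an isomorphic bimodule precisely when $\foc(\CF)\leq\ker(\zeta)$ (this is the content of the paper's Lemmas~\ref{foc1} and~\ref{foc2}). However, your passage from this $\OP$-$\OP$-bimodule isomorphism to an algebra automorphism $\alpha$ does not work. Lemma~\ref{autom-extend} requires an $A$-$\OP$-bimodule isomorphism $A_\beta\cong A_\alpha$, not an $\OP$-$\OP$-bimodule isomorphism. You invoke it with $\alpha=\Id_A$, so you are asserting $A_\eta\cong A$ as $A$-$\OP$-bimodules and concluding that $\eta$ extends to an \emph{inner} $\alpha'$. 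But for nontrivial $\zeta$ this is impossible: by your own injectivity argument (or the paper's Lemma~\ref{zeta-extend-unique}), an inner automorphism sending $u$ to $\zeta(u)u$ forces $\zeta=1$. Equivalently, $A_\eta\cong A$ as $A$-$\OP$-bimodules would mean right multiplication by some $c\in A^\times$ intertwines the two right $\OP$-actions, giving $c^{-1}uc=\zeta(u)u$, again inner. So the hypothesis of Lemma~\ref{autom-extend} with $\alpha=\Id$ simply fails when $\zeta\neq 1$, and the argument collapses.

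The paper closes this gap in two different ways. The short route (Lemma~\ref{zeta-extend1}) uses the hyperfocal subalgebra $D$: one defines $\alpha(du)=\zeta(u)du$ on $A=\bigoplus_u Du$ directly. The longer route avoiding $D$ passes through $A_\eta\tenOP A$: one uses the $\OP$-$\OP$-isomorphism $A\cong{_\eta A_\eta}$ to prove that $\End_{A^e}(A_\eta\tenOP A)$ surjects onto $\End_{\bar A^e}(\bar A\tenkP\bar A)$ (Lemma~\ref{foc3}), lifts the idempotent cutting out $\bar A$, and obtains a summand $N\cong A_\alpha$ with $\alpha\in\Aut_1(A)$; then Lemma~\ref{foc4} upgrades this to $A_\alpha\cong A_\eta$ as $A$-$\OP$-bimodules, whence $\alpha$ can be chosen to extend $\eta$. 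The missing idea in your sketch is precisely this lifting step producing a genuinely non-inner $\alpha$.

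Your uniqueness and injectivity arguments are closer to the mark but still loose. For uniqueness, the paper shows $\Aut_P(A)\cap\Aut_1(A)\cdot\Inn(A)\leq\Inn(A)$ by embedding $A_\alpha$ into $A\tenOP A$ and using that $p$-permutation $k(G\times G)$-modules lift uniquely (Lemmas~\ref{AalphaA}--\ref{Aut1capP}); your endomorphism-ring sketch does not obviously yield this. For injectivity, your first attempt (``the only $p$-power root of unity congruent to $1$ mod $J(\CO)$ is $1$'') is false, as you note; the correct argument, which you gesture toward, is that $\alpha$ inner forces $A_\eta\cong A$ as $\OP$-$\OP$-bimodules, so the summand $\OP_\eta$ is a permutation module, forcing $\zeta=1$ (Lemma~\ref{zeta-extend-unique}).
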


We denote by $\Aut_P(A)$ the group of $\CO$-algebra 
automorphisms of $A$ which preserve the image of $P$ in $A$ 
elementwise; that is, $\Aut_P(A)$ is the group of automorphisms of $A$ 
as an interior $P$-algebra. By a result of Puig 
\cite[14.9]{Puigmodules}, the group $\Aut_P(A)$ is canonically 
isomorphic to a subgroup of the $p'$-group $\Hom(E,k^\times)$, where 
$E$ is the inertial quotient of $B$ associated with $A$. 

Whenever convenient, we identify the 
elements in $\Hom(P/\foc(\CF),\CO^\times)$ with the subgroup of all 
$\zeta\in$ $\Hom(P,\CO^\times)$ satisfying $\foc(\CF)\leq$ $\ker(\zeta)$.
Note that if $\zeta\in$ $\Hom(P,\CO^\times)$ and if $\eta$ is the 
automorphism of $\OP$ defined by $\eta(u)=$ $\zeta(u)u$ for all $u\in$ 
$P$, then $\eta\in$ $\Aut_1(\OP)$ because the image in $k$ of any 
$p$-power root of unity in $\CO$ is equal to $1_k$. 

The fastest way to show the existence of $\alpha$ as stated uses the 
hyperfocal subalgebra. We state this as a separate Lemma, since we will 
give at the end of this section a second proof of this fact which does 
not require the hyperfocal subalgebra.

\begin{Lemma} \label{zeta-extend1}
Let $\zeta\in$ $\Hom(P, \CO^\times)$ such that $\foc(\CF)\leq$
$\ker(\zeta)$. Then there is $\alpha\in$ $\Aut_1(A)$ such that
$\alpha(u)=$ $\zeta(u)u$ for all $u\in$ $P$.
\end{Lemma}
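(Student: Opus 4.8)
The plan is to use the hyperfocal subalgebra $D$ of the source algebra $A$ described at the end of \S \ref{source-para}. Recall that $D$ is a unitary $P$-stable subalgebra of $A$ with $D\cap Pi=$ $\hyp(\CF)i$, and that $A=$ $\oplus_u Du$ as an $\CO$-module, where $u$ runs over a set of representatives in $P$ of $P/\hyp(\CF)$. First I would observe that since $\foc(\CF)=$ $\hyp(\CF)P'$ and $\foc(\CF)\leq$ $\ker(\zeta)$, the homomorphism $\zeta$ factors through the abelian quotient $P/\foc(\CF)$; in particular $\hyp(\CF)\leq$ $\ker(\zeta)$, so $\zeta$ is constant on each coset $w\hyp(\CF)$ of $\hyp(\CF)$ in $P$. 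This is what will make the following definition well-defined.

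Next I would define $\alpha$ directly on the decomposition $A=$ $\oplus_u Du$. Fix a set $R$ of representatives in $P$ of $P/\hyp(\CF)$, say with $1\in R$, so that every element of $A$ is uniquely of the form $\sum_{u\in R} d_u u$ with $d_u\in$ $D$. Define $\alpha(\sum_{u\in R} d_u u)=$ $\sum_{u\in R}\zeta(u) d_u u$. Equivalently, $\alpha$ is the $\CO$-linear map which is the identity on $D$ and sends $u$ to $\zeta(u)u$ for $u\in R$; since $\zeta$ is constant on $\hyp(\CF)$-cosets and $wi=$ $\zeta(w)^{-1}\cdot \zeta(w) wi$, one checks this sends $wi$ to $\zeta(w)wi$ for \emph{all} $w\in P$, not just the chosen representatives. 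I then need to verify that $\alpha$ is an $\CO$-algebra automorphism. It is clearly an $\CO$-linear bijection (it is diagonal with invertible scalars on the given basis-by-$D$ decomposition). For multiplicativity, take $d u$ and $d' u'$ with $d,d'\in D$ and $u,u'\in P$. Since $D$ is $P$-stable, ${}^{u}(d')=$ $u d' u^{-1}\in$ $D$, so $(du)(d'u')=$ $d\,({}^u d')\, uu'$, and writing $uu'=$ $w v$ with $w$ the representative of $uu'\hyp(\CF)$ and $v\in\hyp(\CF)$, and using $v\in D$, this lies in $D\cdot w$. Applying $\alpha$ multiplies by $\zeta(uu')=$ $\zeta(u)\zeta(u')$ (as $\zeta$ kills $\hyp(\CF)$), whereas $\alpha(du)\alpha(d'u')=$ $\zeta(u)\zeta(u')(du)(d'u')$; so $\alpha$ is multiplicative. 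It preserves the identity since $1\in D$.

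Finally I need $\alpha\in$ $\Aut_1(A)$, i.e. $\alpha$ induces the identity on $A/\pi A$. This is immediate: $\alpha(a)-a\in$ $\sum_{u\in R}(\zeta(u)-1)Du$, and each $\zeta(u)-1$ lies in $J(\CO)=$ $\pi\CO$ because $\zeta(u)$ is a $p$-power root of unity in $\CO$ (the image of any such root of unity in $k$ is $1_k$), so $\alpha(a)\equiv a$ modulo $\pi A$. This proves the Lemma. The only genuinely delicate point is checking that the recipe for $\alpha$ does not depend on the chosen transversal $R$ and is genuinely multiplicative; this rests entirely on the two facts that $D$ is $P$-stable and that $\zeta$ vanishes on $\hyp(\CF)$, the latter being exactly where the hypothesis $\foc(\CF)\leq\ker(\zeta)$ (combined with $\foc(\CF)\supseteq\hyp(\CF)$) enters. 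Everything else is formal. I would expect the write-up to be short, with the bulk of the (routine) work being the multiplicativity verification sketched above.
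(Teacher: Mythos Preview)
Your proposal is correct and follows essentially the same approach as the paper's own proof: define $\alpha$ on the hyperfocal decomposition $A=\oplus_u Du$ by $\alpha(du)=\zeta(u)du$, use $\hyp(\CF)\leq\foc(\CF)\leq\ker(\zeta)$ for well-definedness, and observe that $p$-power roots of unity reduce to $1$ in $k$ to get $\alpha\in\Aut_1(A)$. The paper merely calls the multiplicativity check a ``trivial verification'' where you spell it out, but the argument is the same.
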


\begin{proof}
Let $D$ be a hyperfocal subalgebra of the source algebra $A$ of $B$.
That is, $D$ is a $P$-stable subalgebra of $A$ such that $D\cap Pi=$ 
$\hyp(\CF)$ and $A=$ $\oplus_u Du$, where $u$ runs over a set of 
representatives of $P/\hyp(\CF)$ in $P$. For $d\in$ $D$ and $u\in$ $P$ 
define $\alpha(du)=$ $\zeta(u)du$. Since $\foc(\CF)\leq$ $\ker(\zeta)$, 
this is well-defined, and extends linearly to $A$. A trivial 
verification shows that this is an $\CO$-algebra automorphism of $A$ 
which acts as the identity on $D$. The image of the $p$-power root of 
unity $\zeta(u)$ in $k$ is $1$, and hence $\alpha\in$ $\Aut_1(A)$. 
\end{proof}

\begin{Lemma} \label{zeta-extend2}
Let $\zeta\in$ $\Hom(P, \CO^\times)$ such that there exists
$\alpha\in$ $\Aut(A)$ satisfying $\alpha(u)=$ $\zeta(u)u$ for all 
$u\in$ $P$. Then $\foc(\CF)\leq$ $\ker(\zeta)$. 
\end{Lemma}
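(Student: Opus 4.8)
The plan is to read off $\foc(\CF)\leq\ker(\zeta)$ from the interplay between the $p$-permutation structure of the source algebra $A$ as an $\CO(P\times P)$-module and the description, recalled in \ref{source-para}, of its indecomposable $\OP$-$\OP$-bimodule summands in terms of $\CF$. Write $\eta\in\Aut(\OP)$ for the automorphism $\eta(u)=\zeta(u)u$, $u\in P$, and identify $\OP$ with its image in $A$. First I would observe that the hypothesis on $\alpha$ is precisely the statement that $\alpha$, viewed as a map of $\OP$-$\OP$-bimodules, is an isomorphism $A\cong{_\eta A_\eta}$: for $u,w\in P$ and $a\in A$ one has $\alpha(uaw)=\alpha(u)\alpha(a)\alpha(w)=\zeta(u)\zeta(w)\,u\,\alpha(a)\,w$, and the right-hand side is exactly the image of $\alpha(a)$ under the left-$u$, right-$w$ action on ${_\eta A_\eta}$. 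Since $A$ is a direct summand of $\OG$ as an $\CO(P\times P)$-module, it has trivial source; hence the isomorphic bimodule ${_\eta A_\eta}$ has trivial source as an $\CO(P\times P)$-module, and therefore so does every indecomposable direct summand of ${_\eta A_\eta}$ as an $\CO(P\times P)$-module, equivalently as an $\OP$-$\OP$-bimodule.

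Next I would translate this into a condition on the morphisms generating $\CF$. Let $\OP_\varphi\tenOQ\OP$ be an indecomposable direct summand of $A$ as an $\OP$-$\OP$-bimodule, with $\varphi\in\Hom_\CF(Q,P)$; by \ref{source-para} the morphisms $\varphi$ arising this way generate $\CF$. Inspecting the canonical $\CO$-basis shows that as an $\CO(P\times P)$-module $\OP_\varphi\tenOQ\OP\cong\Ind^{P\times P}_{\Delta_\varphi(Q)}(\CO)$, where $\Delta_\varphi(Q)=\{(\varphi(v),v)\mid v\in Q\}$; in particular it is indecomposable with vertex $\Delta_\varphi(Q)$ and trivial source. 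Twisting the left and the right $\OP$-action by $\eta$ multiplies the $\CO(P\times P)$-action by the linear character $\xi:P\times P\to\CO^\times$ with $\xi(u,w)=\zeta(u)\zeta(w)^{-1}$; that is, ${_\eta(\OP_\varphi\tenOQ\OP)_\eta}\cong(\OP_\varphi\tenOQ\OP)\tenO\CO_\xi$ as $\CO(P\times P)$-modules, where $\CO_\xi$ denotes the $\CO$-free rank-one module on which $(u,w)$ acts as multiplication by $\xi(u,w)$. Because tensoring an indecomposable module with a one-dimensional module leaves its vertex unchanged and replaces its source by the tensor product of that source with the restriction of the one-dimensional module, the bimodule ${_\eta(\OP_\varphi\tenOQ\OP)_\eta}$ has vertex $\Delta_\varphi(Q)$ and source the restriction of $\CO_\xi$ to $\Delta_\varphi(Q)$.

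By the first step this bimodule has trivial source, so $\xi$ restricts to the trivial character on $\Delta_\varphi(Q)$: $1=\xi(\varphi(v),v)=\zeta(\varphi(v))\zeta(v)^{-1}$, i.e. $v^{-1}\varphi(v)\in\ker(\zeta)$, for every $v\in Q$. The subgroup $\ker(\zeta)$ of $P$ contains $P'$ automatically, since $\zeta$ takes values in an abelian group. As the morphisms $\varphi$ treated above generate $\CF$, a routine induction over composites and inverses of such morphisms (and of inner automorphisms of $P$, which contribute only elements of $P'$) then gives $\psi(w)w^{-1}\in\ker(\zeta)$ for every $\psi\in\Hom_\CF(R,P)$ and every $w\in R$. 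Hence $\foc(\CF)\leq\ker(\zeta)$, as required.

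The point that needs care — and which I expect to be the only genuine obstacle — is that the twist by $\eta$ is invisible modulo $J(\CO)$: since $\zeta$ takes values among $p$-power roots of unity, the character $\xi$ reduces to the trivial character and $k\tenO{_\eta A_\eta}$ coincides with $k\tenO A$ as $kP$-$kP$-bimodules. Thus the trivial source property of ${_\eta A_\eta}$ cannot be recovered from reduction to $k$ — a rank-one $\OP$-module $\CO_\psi$ with $\psi$ of nontrivial $p$-power order reduces to the trivial $kP$-module yet is not a trivial source module — so one genuinely has to exploit the $\CO$-linear isomorphism $\alpha:A\cong{_\eta A_\eta}$, and it is exactly the comparison over $\CO$ of the source of $\OP_\varphi\tenOQ\OP$ with that of its $\eta$-twist that forces $\xi$, hence $\zeta$, to be trivial on $\Delta_\varphi(Q)$.
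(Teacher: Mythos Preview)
Your argument is correct and takes a genuinely different route from the paper's proof. The paper works with $\CF$-centric subgroups $Q$ and $\varphi\in\Aut_\CF(Q)$, uses the uniqueness of the local point of $Q$ on $A$, invokes Puig's theorem that $\varphi$ is realised by conjugation with some $c\in A^\times$, and then compares $\alpha$ with conjugation by $c$ to force a twisted bimodule $\OQ_\theta$ (with $\theta(u)=\zeta(u^{-1}\varphi(u))u$) to be a permutation $\CO(Q\times Q)$-module, hence $\theta=\Id$. Your approach bypasses the machinery of pointed groups and the realisation of fusion in $A^\times$: you read the constraint directly off the global $\OP$-$\OP$-bimodule structure of $A$, using that $\alpha$ itself is an isomorphism $A\cong{_\eta A_\eta}$ of $\CO(P\times P)$-modules and that each summand $\OP_\varphi\tenOQ\OP\cong\Ind^{P\times P}_{\Delta_\varphi(Q)}(\CO)$ must remain trivial-source after twisting by the rank-one module $\CO_\xi$. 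This is shorter and more elementary; in effect it furnishes a converse to the paper's Lemma \ref{foc2}, which uses the very same summand-by-summand analysis to show that $\foc(\CF)\leq\ker(\zeta)$ implies $A\cong{_\eta A_\eta}$. The paper's argument, by contrast, localises at centric $Q$ and might adapt more readily to settings where the full $\OP$-$\OP$-bimodule decomposition of $A$ is not explicitly available. Your closing induction over composites and inverses of the generating morphisms (and conjugations in $P$, absorbed by $P'\leq\ker(\zeta)$) is indeed routine, since the condition $\zeta\circ\varphi=\zeta|_Q$ is visibly stable under these operations.
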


\begin{proof}
Let $Q$ be an $\CF$-centric subgroup of $P$ and $\varphi\in$
$\Aut(Q)$. We need to show that $u^{-1}\varphi(u)\in$ $\ker(\zeta)$
for any $u\in$ $Q$. Since $Q$ is $\CF$-centric, it follows from
\cite[(41.1)]{Thev} that $Q$ has a unique local point $\delta$ on $A$.
Let $\alpha\in$ $\Aut(A)$ such that $\alpha(u)=$ $\zeta(u)u$ for
all $u\in$ $P$. Note that $u$ and $\alpha(u)$ act in the same way
by conjugation on $A$ because they differ by a scalar. Thus
$\alpha$ induces an automorphism on $A^Q$ which preserves the ideals
of relative traces $A^Q_R$, where $R$ is a subgroup of $Q$. It follows
that $\alpha$ permutes the points of $Q$ on $A$ preserving the property
of being local. The uniqueness of $\delta$ implies 
that $\alpha(\delta)=$
$\delta$. Let $j\in$ $\delta$. Thus $\alpha(j) = $ $d^{-1}jd$ for some
$d\in$ $(A^Q)^\times$. After replacing $\alpha$, if necessary, we may 
assume that $\alpha$ fixes $j$ and still satisfies $\alpha(u)=$ 
$\zeta(u)u$ for all $u\in$ $Q$. By \cite[2.12, 3.1]{Puigloc} 
there exists $c\in$ $A^\times$ such that ${^c(uj)}=$ $\varphi(u)j$ for
all $u\in$ $Q$. Applying $\alpha$ to this equation yields
$${^{\alpha(c)}\zeta(u)uj} = \zeta(\varphi(u))\varphi(u)j$$
for all $u\in$ $Q$. Conjugating by $c^{-1}$ and multiplying by 
$\zeta(u^{-1})$ yields
$${^{c^{-1}\alpha(c)} uj} = \zeta(u^{-1}\varphi(u)) u j$$
for all $u\in$ $Q$. Set $\kappa(u)=\zeta(u^{-1}\varphi(u))$. 
This defines a group homomorphism $\kappa : Q\to$ $\CO^\times$.
We need to show that $\kappa$ is the trivial group homomorphism.
Since $c^{-1}\alpha(c)$ centralises $j$, it follows that the
element $w=$ $c^{-1}\alpha(c)j$ belongs to $(jAj)^\times$. Moreover,
by the above, conjugation by $w$ on $jAj$ induces an inner automorphism
$\sigma$ of $jAj$ whose restriction to $\OQ$ (identified to its image
$\OQ j$ in $jAj$) is the automorphism $\theta$ of $\OQ$ defined by 
$\theta(u)=$ $\kappa(u)u$ for all $u\in$ $Q$.  Since $\sigma$ is inner, 
we have $jAj_\sigma\cong$ $jAj$ as $jAj$-$jAj$-bimodules. Thus we have
$jAj_\theta\cong$ $jAj$ as $\OQ$-$\OQ$-bimodules. In particular,
$jAj_\theta$ is a permutation $\CO(Q\times Q)$-module. Since
$\Br_Q(j)\neq$ $0$, it follows that $\OQ$ is a direct summand of
$jAj$ as an $\CO(Q\times Q)$-module. Thus $\OQ_\theta$ is
a direct summand of $jAj_\theta$ as an $\CO(Q\times Q)$-module.
But $\OQ_\theta$ is a permutation $\CO(Q\times Q)$-module if and only
if $\kappa=1$. The result follows.
\end{proof} 

The two lemmas \ref{zeta-extend1} and \ref{zeta-extend2} prove the
first statement of Theorem \ref{PfocAut}. The following lemmas collect
the technicalities needed for proving the remaining statements of
Theorem \ref{PfocAut}.

\begin{Lemma} \label{AalphaA}
Let $\alpha\in$ $\Aut(A)$. Then $A_\alpha$ is isomorphic to a direct
summand of $A_\alpha\tenOP A$ as an $A$-$A$-bimodule.
\end{Lemma}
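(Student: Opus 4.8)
The plan is to exploit the fact that $A$ is a direct summand of itself as an $\OP$-$\OP$-bimodule---more precisely, that the multiplication map $A \tenOP A \to A$ splits. First I would recall why: since $A$ is projective as a left $\OP$-module and as a right $\OP$-module (a standard property of source algebras, recorded in \S\ref{source-para}), the multiplication map $m : A \tenOP A \to A$ of $A$-$A$-bimodules admits an $\OP$-$\OP$-bimodule section $s : A \to A \tenOP A$. Indeed, regarding $A$ as an $\OP$-$\OP$-bimodule, i.e.\ as an $\CO(P \times P)$-module, it is projective on each side and contains the trivial-looking summand $\OP$ (because $\Br_P(i) \neq 0$ forces $\OP$ to be a direct summand of $A$ as an $\CO(P\times P)$-module, by the usual Brauer-homomorphism argument); but in fact all one needs is that $m$ is a split surjection of $\OP$-$\OP$-bimodules, which follows from $1 \in A$ together with projectivity on one side. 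The clean way to get the section: since $A$ is projective as a right $\OP$-module, the surjection $m$ of right $\OP$-modules splits, and one can then average over $P$ acting on the left to make the section $\OP$-$\OP$-bilinear---this is exactly the kind of relative-projectivity argument that $A$ being a $p$-permutation bimodule supplies for free.

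Next I would twist by $\alpha$. Applying the functor $A_\alpha \tenA (-)$ to the split surjection $m : A \tenOP A \to A$ of $A$-$A$-bimodules yields a split surjection of $A$-$A$-bimodules
$$A_\alpha \tenA A \tenOP A \longrightarrow A_\alpha \tenA A.$$
Now $A_\alpha \tenA A \cong A_\alpha$ as $A$-$A$-bimodules (via the multiplication map, since $A_\alpha$ is an $A$-$A$-bimodule and the right factor $A$ acts by right multiplication), and similarly $A_\alpha \tenA A \tenOP A \cong A_\alpha \tenOP A$. Substituting these identifications, the split surjection becomes a split surjection of $A$-$A$-bimodules $A_\alpha \tenOP A \to A_\alpha$, which exactly says that $A_\alpha$ is isomorphic to a direct summand of $A_\alpha \tenOP A$ as an $A$-$A$-bimodule.

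The only genuine point requiring care---the step I expect to be the main obstacle---is producing the $\OP$-$\OP$-bimodule section $s$ of the multiplication map $m : A\tenOP A \to A$, rather than merely a one-sided section. A right-$\OP$-linear section exists trivially from projectivity of $A$ as a right $\OP$-module; to upgrade it to an $\OP$-$\OP$-bilinear one, I would invoke that $A$ is a $p$-permutation $\CO(P\times P)$-module (being a direct summand of $\OG$), so that $A$ is relatively $\Delta P$-projective as an $\CO(P\times P)$-module, and $m$ is a homomorphism of $\CO(P \times P)$-modules that splits over $\Delta P$ (a one-sided section is $\Delta P$-linear after symmetrising, or one simply notes the section as a map of $\CO \Delta P$-modules comes from $1 \mapsto 1 \ten 1$); then the relative trace from $\Delta P$ to $P \times P$ converts the $\Delta P$-splitting into a genuine $\CO(P\times P)$-splitting, using that $m$ is surjective and the $\Delta P$-section composed with $m$ is the identity. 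Once the bilinear section $s$ is in hand, the rest is the formal tensor-functor manipulation above. Alternatively, and perhaps more transparently, one observes directly that $A_\alpha \tenOP A$ has $A_\alpha$ as the image of the $A$-$A$-bimodule homomorphism $A_\alpha \to A_\alpha \tenOP A$, $a \mapsto \sum_{u} au^{-1} \ten u$ (sum over $u$ in $P$, suitably normalised), which is split by the multiplication map; checking this is a bimodule map and a section is the routine verification I would relegate to a sentence.
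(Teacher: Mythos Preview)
Your overall strategy is exactly the paper's: establish that $A$ is a direct summand of $A\tenOP A$ as an $A$-$A$-bimodule, then apply the functor $A_\alpha\tenA -$. The paper dispatches the first step in one line by citing \cite[4.2]{Liperm}; you instead try to reprove it, and that is where the argument breaks.

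The gap is that you only produce an $\OP$-$\OP$-bilinear section of the multiplication map $m:A\tenOP A\to A$, yet in the next paragraph you treat $m$ as a split surjection \emph{in the category of $A$-$A$-bimodules}. The functor $A_\alpha\tenA -$ only preserves splittings that live in $A$-bimod; an $\OP$-$\OP$-section does not survive. Your relative-trace upgrade does not help for two reasons: first, $A$ is not in general relatively $\Delta P$-projective as an $\CO(P\times P)$-module (its indecomposable $\CO(P\times P)$-summands are of the form $\OP_\varphi\tenOQ\OP$ with twisted-diagonal vertices $\{(u,\varphi(u)):u\in Q\}$, not contained in $\Delta P$ unless $\varphi$ is inclusion); second, even a full $\CO(P\times P)$-section would still only be $\OP$-$\OP$-bilinear, not $A$-$A$-bilinear. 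The ``alternative'' explicit map $a\mapsto\sum_u au^{-1}\ten u$ fails for the same reason: it is not right $A$-linear, since for general $b\in A$ one has $\sum_u abu^{-1}\ten u\neq\sum_u au^{-1}\ten ub$.

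What is actually needed is a relative separability element: an element $e\in A\tenOP A$ with $m(e)=1$ and $ae=ea$ for all $a\in A$. This is equivalent to $A$ being relatively $\OP$-separable, which for source algebras follows from the defect-group property (essentially Higman's criterion at the source-algebra level); that is the content of the cited result \cite[4.2]{Liperm}. Once you grant that, your second paragraph is correct and coincides with the paper's proof.
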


\begin{proof}
By \cite[4.2]{Liperm}, $A$ is isomorphic to a direct summand of 
$A\tenOP A$. Tensoring by $A_\alpha\tenA -$ yields the result.
\end{proof}

\begin{Lemma}\label{Aut1Inn}
Let $\alpha\in$ $\Aut_1(A)\cdot\Inn(A)$ such that $A_\alpha$ is
isomorphic to a direct summand of $A\tenOP A$ as an $A$-$A$-bimodule. 
Then $\alpha\in$ $\Inn(A)$.
\end{Lemma}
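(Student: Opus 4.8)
The plan is to combine the two hypotheses on $\alpha$ and transport the question to the residue algebra $k\tenO A$. First I would write $\alpha = \beta\gamma$ with $\beta\in\Aut_1(A)$ and $\gamma\in\Inn(A)$. Since $A_\gamma\cong A$ as bimodules, the summand hypothesis says $A_\beta$ is isomorphic to a direct summand of $A\tenOP A$ as an $A$-$A$-bimodule. Now reduce modulo $\pi$: because $\beta$ induces the identity on $A/\pi A$, the bimodule $A_\beta$ reduces to the trivial bimodule $\bar A = k\tenO A$, and $A\tenOP A$ reduces to $\bar A\ten_{kP}\bar A$. So $\bar A$ is a direct summand of $\bar A\ten_{kP}\bar A$ — which is no contradiction, but the real content is that $A_\beta$ is a lift of $\bar A$ sitting inside a $p$-permutation bimodule. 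The key point I expect to exploit is that $A\tenOP A$ is a permutation $\CO(P\times P)$-module (since $A$ is, as recalled in \S\ref{source-para}), hence so is every direct summand of it that is an $\CO(P\times P)$-module; in particular $A_\beta$ is a $p$-permutation $\CO(P\times P)$-module.

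The heart of the argument should be a rigidity statement: a $p$-permutation $\CO(P\times P)$-module lifting the trivial bimodule $\bar A$, and which moreover carries an $A$-$A$-bimodule structure inducing a Morita self-equivalence, must already be isomorphic to $A$ as an $A$-$A$-bimodule. One route is via Weiss' theorem (cited in the introduction as \cite[Theorem 3]{Weiss}): the indecomposable summands of $A_\beta$ as an $\CO(P\times P)$-module are trivial-source modules with vertices inside $P\times P$; their reductions mod $\pi$ are the indecomposable summands of $\bar A$ as a $k(P\times P)$-module, and trivial-source modules are determined by their reductions. Since $A_\beta$ and $A$ have isomorphic reductions and both are direct sums of trivial-source $\CO(P\times P)$-modules with the same multiplicities, one gets $A_\beta\cong A$ as $\CO(P\times P)$-modules. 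Upgrading this to an $A$-$A$-bimodule isomorphism is then the standard lifting argument: an $\CO(P\times P)$-isomorphism combined with the fact that $A_\beta$ and $A$ agree as $A$-$A$-bimodules modulo $\pi$, using that $\End_{A\tenO A^{\op}}(A_\beta)$ is a complete local ring (or invoking the description of $\Out_1(A)$ as the kernel of $\Pic(A)\to\Pic(k\tenO A)$ from \S\ref{autom-para}) forces $A_\beta\cong A$ as bimodules, i.e. $\beta\in\Inn(A)$, hence $\alpha\in\Inn(A)$.

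I expect the main obstacle to be precisely this last upgrade — passing from an $\CO(P\times P)$-module isomorphism $A_\beta\cong A$ to an $A$-$A$-bimodule isomorphism. The bimodule structure on $A$ is richer than the $P\times P$-action, so the trivial-source argument on its own only controls the restriction to $P\times P$. The clean way around it is to observe that $A_\beta$ with $\beta\in\Aut_1(A)$ represents a class in $\Out_1(A)$, which by the discussion in \S\ref{autom-para} is the kernel of $\Pic(A)\to\Pic(A/\pi A)$; so it suffices to show $[A_\beta]=[A]$ in $\Pic(A)$, and for that it is enough that $A_\beta\cong A$ as, say, left $A$-modules — and the $\CO(P\times P)$-isomorphism (or even just the $\OP$-$\OP$-bimodule isomorphism $A_\beta\cong A$, using Lemma \ref{autom-extend}) gives this, since as a left $A$-module $A_\beta$ is literally $A$. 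In fact, rereading Lemma \ref{autom-extend}: it is enough to produce an isomorphism of $A$-$\OP$-bimodules $A_\beta\cong A$ (taking $B=\OP$, $\beta|_{\OP}$ the relevant map), and then Lemma \ref{autom-extend} already hands back an automorphism extending the identity on $\OP$ in the class of $\beta$; combined with Puig's result that $\Aut_P(A)$ is a $p'$-group, while $\beta\in\Aut_1(A)$ has $p$-power-ish behaviour, this should pin down $\beta\in\Inn(A)$. So the cleanest skeleton is: (1) reduce to $\beta\in\Aut_1(A)$ with $A_\beta \mid A\tenOP A$; (2) get $A_\beta\cong A$ as $\OP$-$\OP$-bimodules via the $p$-permutation/trivial-source rigidity mod $\pi$; (3) apply Lemma \ref{autom-extend} to move $\beta$ into $\Aut_P(A)\cdot\Inn(A)$; (4) use that $\Aut_P(A)$ is a $p'$-group together with $\beta\in\Aut_1(A)$ to conclude the $\Aut_P(A)$-part is trivial, so $\beta$, and hence $\alpha$, is inner.
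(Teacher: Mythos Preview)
Your approach has a genuine gap at the upgrade step. Steps (1) and (2) are fine: you correctly deduce that $A_\beta$ is a $p$-permutation $\CO(P\times P)$-module with the same reduction modulo $\pi$ as $A$, hence $A_\beta\cong A$ as $\CO(P\times P)$-modules, i.e.\ as $\OP$-$\OP$-bimodules. The problem is step (3). To apply Lemma~\ref{autom-extend} with the subalgebra $\OP$ you need an isomorphism $A_\beta\cong A$ of $A$-$\OP$-bimodules (or of $\OP$-$A$-bimodules), not merely of $\OP$-$\OP$-bimodules, and your argument does not produce one. Your alternative route via $\Pic(A)$ is also flawed: the observation that $A_\beta\cong A$ as left $A$-modules is trivially true for \emph{any} $\beta\in\Aut(A)$ and, by the very passage in \S\ref{autom-para} you cite, tells you only that the bimodule has the form $A_\gamma$ for some $\gamma$ --- which you already knew. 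It does not give $[A_\beta]=[A]$ in $\Pic(A)$.

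The paper circumvents exactly this difficulty by passing to the block. Tensoring with $Bi\tenA-\tenA iB$ sends $A_\alpha$ to $Bi_\alpha\tenA iB$ and $A\tenOP A$ to $Bi\tenOP iB$, so $Bi_\alpha\tenA iB$ is a $p$-permutation $\CO(G\times G)$-module whose reduction agrees with that of $Bi\tenA iB\cong B$. The crucial point is that a $\CO(G\times G)$-module isomorphism \emph{is} an $\OG$-$\OG$-bimodule isomorphism, hence a $B$-$B$-bimodule isomorphism; so the unique lifting of $p$-permutation modules already yields $Bi_\alpha\tenA iB\cong B$ as $B$-$B$-bimodules, and multiplying by $i$ on both sides gives $A_\alpha\cong A$ as $A$-$A$-bimodules directly. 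At the source-algebra level your $\CO(P\times P)$-isomorphism carries no such bimodule information, because $A$ is much larger than $\OP$; this is why the detour through $B$ is not optional here. (Your step (4), using that $\Out_1(A)$ is pro-$p$ while the image of $\Aut_P(A)$ in $\Out(A)$ is a $p'$-group, would be a legitimate way to finish \emph{if} (3) were in hand, and would in fact give an independent proof of Lemma~\ref{Aut1capP}; but it does not rescue the present argument.)
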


\begin{proof}
Tensoring $A_\alpha$ and $A\tenOP A$ by $Bi\tenA - \tenA iB$ implies that 
$Bi_\alpha\tenA iB$ is isomorphic to a direct summand of $Bi\tenOP iB$. 
This shows that $Bi_\alpha\tenA iB$ is a $p$-permutation 
$\CO(G\times G)$-module. Since $\alpha\in$ $\Aut_1(A)\cdot\Inn(A)$, it 
follows that $\alpha$ induces an inner automorphism on $k\tenO A$.
Thus $k\tenO Bi_\alpha\tenA iB\cong$ $k\tenO Bi\tenA iB$.
The fact that $p$-permutation $k(G\times G)$-modules lift
uniquely, up to isomorphism, to $\CO(G\times G)$-modules implies that
$Bi_\alpha\tenA iB\cong$ $Bi\tenA iB$ as $\CO(G\times G)$-modules.
Mutliplying both modules on the left and on the right by $i$ implies 
that $A_\alpha\cong$ $A$ as $A$-$A$-bimodules, and hence
$\alpha$ is inner.
\end{proof}

\begin{Lemma} \label{Aut1capP}
We have $\Aut_P(A)\cap \Aut_1(A)\cdot \Inn(A) \leq$ $\Inn(A)$.
\end{Lemma}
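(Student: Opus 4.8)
The plan is to reduce everything to the two bimodule lemmas \ref{AalphaA} and \ref{Aut1Inn} established just above. So I would start with $\alpha\in \Aut_P(A)\cap \Aut_1(A)\cdot\Inn(A)$ and aim to show $\alpha\in\Inn(A)$.

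The first step is the observation that, since $\alpha$ fixes the image of $P$ in $A$ elementwise, the $A$-$A$-bimodule $A_\alpha$ has the same restriction to $\OP$-$\OP$-bimodules as $A$: the left $A$-module structure on $A_\alpha$ is the standard one, and for $u\in P$ the right action of $ui$ on $A_\alpha$ sends $m$ to $m\alpha(ui)=m(ui)$. Hence $A_\alpha\tenOP A$ and $A\tenOP A$ have the same underlying $\CO$-module (the defining relations over $\OP$ agree) and the same $A$-$A$-bimodule structure, the left and right $A$-actions being on the two extreme tensor factors; so the identity map is an isomorphism $A_\alpha\tenOP A\cong A\tenOP A$ of $A$-$A$-bimodules.

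The second step invokes Lemma \ref{AalphaA}, which says that $A_\alpha$ is isomorphic to a direct summand of $A_\alpha\tenOP A$ as an $A$-$A$-bimodule; by the first step $A_\alpha$ is therefore isomorphic to a direct summand of $A\tenOP A$. Since $\alpha\in\Aut_1(A)\cdot\Inn(A)$ by hypothesis, Lemma \ref{Aut1Inn} applies directly and yields $\alpha\in\Inn(A)$, which is the assertion.

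I do not expect a genuine obstacle here: the argument is short once the previous lemmas are available. The only point requiring care is the bookkeeping with the subscript notation, that is, keeping track of the fact that in $A_\alpha$ it is the \emph{right} module structure that is twisted by $\alpha$, and that this is exactly the structure which becomes untwisted when $\alpha$ lies in $\Aut_P(A)$. It is worth noting that this argument does not use Puig's description of $\Aut_P(A)$ as a subgroup of the $p'$-group $\Hom(E,k^\times)$.
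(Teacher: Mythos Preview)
Your proof is correct and follows essentially the same approach as the paper: both arguments combine Lemma~\ref{AalphaA} with the isomorphism $A_\alpha\tenOP A\cong A\tenOP A$ coming from $\alpha\in\Aut_P(A)$, and then conclude via Lemma~\ref{Aut1Inn}. Your write-up merely spells out in more detail why the right $\OP$-structure on $A_\alpha$ is unchanged.
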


\begin{proof}
Let $\alpha\in$ $\Aut_P(A)\cap \Aut_1(A)\cdot \Inn(A)$.
By \ref{AalphaA}, $A_\alpha$ is a isomorphic to a direct summand of
$A_\alpha\tenOP A\cong$ $A\tenOP A$, where last isomorphism uses the 
fact that $\alpha$ fixes the image of $P$ in $A$. Thus $\alpha$ is 
inner by \ref{Aut1Inn}.
\end{proof}

The following lemma shows that there is a well-defined group
homomorphism $\Psi : \Hom(P/\foc(\CF))$ $\to$ $\Out_1(A)$ as stated
in Theorem \ref{PfocAut}.

\begin{Lemma} \label{AutOPequal}
Let $\alpha$, $\alpha'\in$ $\Aut_1(A)\cdot\Inn(A)$ such that 
$\alpha(u)=$ $\alpha'(u)$ for all $u\in$ $P$. Then $\alpha$ and 
$\alpha'$ have the same image in $\Out(A)$.
\end{Lemma}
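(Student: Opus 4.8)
The statement follows by reducing to Lemma \ref{Aut1capP}. Set $\beta=\alpha^{-1}\circ\alpha'$. Since $\Inn(A)$ is normal in $\Aut(A)$, the subset $\Aut_1(A)\cdot\Inn(A)$ is a subgroup of $\Aut(A)$; as both $\alpha$ and $\alpha'$ lie in it, so does $\alpha^{-1}$, and hence so does $\beta$. Thus $\beta\in\Aut_1(A)\cdot\Inn(A)$.

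Next I would check that $\beta$ fixes the image of $P$ in $A$ elementwise. For $u\in P$ we have $\alpha'(u)=\alpha(u)$ by hypothesis, so $\beta(u)=\alpha^{-1}(\alpha'(u))=\alpha^{-1}(\alpha(u))=u$. Therefore $\beta\in\Aut_P(A)$, and combining with the previous paragraph, $\beta\in\Aut_P(A)\cap\Aut_1(A)\cdot\Inn(A)$. By Lemma \ref{Aut1capP} this forces $\beta\in\Inn(A)$.

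Finally, since $\alpha'=\alpha\circ\beta$ with $\beta$ inner, the automorphisms $\alpha$ and $\alpha'$ have the same image in $\Out(A)$, which is the assertion. There is no real obstacle here: the content is entirely packaged into Lemma \ref{Aut1capP} (which in turn rests on Lemmas \ref{AalphaA} and \ref{Aut1Inn}), and the only points to verify carefully are that $\Aut_1(A)\cdot\Inn(A)$ is genuinely a subgroup (so that $\beta$ lands in it) and that $\beta$ acts as the identity on $Pi$, both of which are immediate.
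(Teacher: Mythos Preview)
Your proof is correct and follows exactly the same approach as the paper: form $\beta=\alpha^{-1}\circ\alpha'$, observe that it lies in $\Aut_P(A)\cap\Aut_1(A)\cdot\Inn(A)$, and invoke Lemma~\ref{Aut1capP}. Your version is simply a more detailed write-up, spelling out why $\Aut_1(A)\cdot\Inn(A)$ is a subgroup and why $\beta$ fixes $P$ elementwise, whereas the paper asserts these facts in a single sentence.
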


\begin{proof}
The automorphism $\alpha^{-1}\circ\alpha'$ belongs to 
$\Aut_1(A)\cdot\Inn(A)$ and acts as identity on $P$. Thus this 
automorphism is inner by \ref{Aut1capP}. The result follows.
\end{proof}

\begin{Remark}
The assumption in the previous lemma that both $\alpha$, $\alpha'$ 
belong to $\Aut_1(A)\cdot\Inn(A)$ is necessary. For instance, if $p$ is 
odd, $P$ is cyclic of order $p$, and $E\leq$ $\Aut(P)$ is the subgroup 
of order $2$, then $\Id_\OP$ extends to the automorphism $\beta$ in 
$\Aut_P(\CO P\rtimes E)$ sending the nontrivial element $t$ of $E$ to 
$-t$. Clearly $\beta$ does not induce an inner automorphism on 
$kP\rtimes E$.
\end{Remark}

The next lemma shows that the group homomorphism $\Psi$
in the statement of Theorem \ref{PfocAut} is injective; this
completes the proof of Theorem \ref{PfocAut}.

\begin{Lemma} \label{zeta-extend-unique}
Let $\zeta\in$ $\Hom(P,\CO^\times)$. 
Suppose that there is $\alpha\in$ $\Aut(A)$ such that $\alpha(u)=$
$\zeta(u)u$ for all $u\in$ $P$. If $\alpha$ is inner, then $\zeta=$ $1$.
\end{Lemma}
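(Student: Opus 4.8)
The plan is to run the same $p$-permutation argument used in the proof of Lemma \ref{zeta-extend2}, but applied to the whole defect group $P$ and the source idempotent $i$ rather than to an $\CF$-centric subgroup $Q$ and a local point of $Q$. Suppose $\alpha\in\Aut(A)$ is inner and satisfies $\alpha(u)=\zeta(u)u$ for all $u\in P$. Write $\alpha(a)=c^{-1}ac$ for some $c\in A^\times$. Then for all $u\in P$ we have $c^{-1}(ui)c=\zeta(u)ui$, so conjugation by $w=c$ on $A$ restricts on $\OP i$ (identified with its image in $A$) to the automorphism $\theta$ of $\OP$ defined by $\theta(u)=\zeta(u)u$. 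Exactly as in the proof of Lemma \ref{zeta-extend2}, since $\theta$ is induced by conjugation with a unit of $A$, we get $A_\theta\cong A$ as $A$-$A$-bimodules, hence a fortiori $A_\theta\cong A$ as $\OP$-$\OP$-bimodules. Since $A=iBi$ is a $p$-permutation $\CO(P\times P)$-module (being a direct summand of $\OG$ as an $\CO(P\times P)$-module), so is $A_\theta$.

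The key point is then to extract $\OP$ from $A$ as an $\CO(P\times P)$-direct summand: because $\Br_P(i)\neq 0$ by the defining property of a source idempotent, $\OP$ is isomorphic to a direct summand of $A$ as an $\CO(P\times P)$-module, and correspondingly $\OP_\theta$ is a direct summand of $A_\theta$ as an $\CO(P\times P)$-module. Hence $\OP_\theta$ is a $p$-permutation $\CO(P\times P)$-module. But $\OP_\theta$ is a permutation $\CO(P\times P)$-module if and only if $\zeta=1$: indeed, $\OP_\theta\cong\Ind^{P\times P}_{\Delta P}(\CO_\zeta)$ by Lemma \ref{zetaeta}(ii) (with $\eta=\theta$), and an induced module $\Ind^{P\times P}_{\Delta P}(\CO_\zeta)$ is a permutation module exactly when the one-dimensional $\CO\Delta P$-module $\CO_\zeta$ is trivial, i.e.\ $\zeta=1$; for nontrivial $\zeta$ the module $\CO_\zeta$ has no $P$-stable $\CO$-basis (its reduction mod $J(\CO)$ is the trivial $kP$-module by the root-of-unity remark, so a permutation lift would force $\CO_\zeta\cong\CO$, a contradiction on the $P$-action). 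Therefore $\zeta=1$.

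I expect the only real subtlety to be the last step — confirming that $\OP_\theta$ fails to be a $p$-permutation module when $\zeta\neq 1$ — but this is the same computation already invoked at the end of the proof of Lemma \ref{zeta-extend2} for $\OQ_\theta$, now with $Q=P$, and it is genuinely elementary: a $p$-permutation $\CO(P\times P)$-module has a $k$-reduction that is a $p$-permutation $k(P\times P)$-module, and indecomposable $p$-permutation modules are the trivial-source modules, whereas $k\tenO\OP_\theta\cong\Ind^{P\times P}_{\Delta P}(k)=kP$ only if $\zeta$ reduces to $1$ (which it does), yet the unique $\CO(P\times P)$-lift of $kP$ is $\OP$, forcing $\OP_\theta\cong\OP$ and hence $\zeta=1$ by comparing the $\Delta P$-actions. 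So the proof is a direct transcription of the final paragraph of Lemma \ref{zeta-extend2}'s argument, with the $\CF$-centric subgroup replaced by $P$ itself and the local point replaced by the source idempotent $i$.
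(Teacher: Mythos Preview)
Your proof is correct and follows essentially the same route as the paper's: from $\alpha$ inner deduce $A_\alpha\cong A$ as $A$-$A$-bimodules, restrict to $\OP$-$\OP$-bimodules to get $A_\eta\cong A$ (a permutation $\CO(P\times P)$-module), extract the summand $\OP_\eta$, and observe that this is a trivial source module only if $\zeta=1$. The paper is terser on the last step, while you spell out the lifting argument; one small notational slip is writing ``$A_\theta\cong A$ as $A$-$A$-bimodules'' where $\theta$ is only defined on $\OP$ --- you mean $A_\alpha$ there --- but the intended statement and the ensuing restriction to $\OP$-$\OP$-bimodules are clear.
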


\begin{proof}
Denote by $\eta$ the automorphism of $\OP$ defined by $\eta(u)=$
$\zeta(u)u$ for all $u\in$ $P$. Suppose that $\alpha$ is inner. Then 
${A_\alpha}\cong$ $A$ as $A$-$A$-bimodules. Since $\alpha$ extends 
$\eta$, it follows that $A_\eta\cong$ $A$ as $\OP$-$\OP$-bimodules. 
In particular, ${A_\eta}$ is a permutation $\CO(P\times P)$-module. 
Since $\OP$ is a direct summand of $A$ as an $\CO(P\times P)$-module, 
it follows that $A_\eta$ has $\OP_\eta$ as an indecomposable 
direct summand. This is a trivial source $\CO(P\times P)$-module if and 
only if $\zeta=1$, whence the result.
\end{proof}

The connection with automorphisms of $B$ is described in the following
observation, combining some of the standard facts on automorphisms
mentioned previously.

\begin{Lemma} \label{AextendB}
Every $\alpha\in$ $\Aut_1(A)$ extends to an automorphism $\beta\in$ 
$\Aut_1(B)$, and the correspondence $\alpha\mapsto\beta$ induces a
group isomorphism $\Out_1(A)\cong$ $\Out_1(B)$.
\end{Lemma}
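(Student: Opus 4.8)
The statement to prove is Lemma~\ref{AextendB}: every $\alpha\in\Aut_1(A)$ extends to some $\beta\in\Aut_1(B)$, and $\alpha\mapsto\beta$ induces an isomorphism $\Out_1(A)\cong\Out_1(B)$. The strategy is to exploit the Morita equivalence between $B$ and its source algebra $A=iBi$ given by the bimodules $Bi$ and $iB$, together with the bimodule characterization of outer automorphisms recalled in \S\ref{autom-para}. Recall that $B\cong Bi\tenA iB$ as $B$-$B$-bimodules and $A\cong iB\tenB Bi$ as $A$-$A$-bimodules. The key point is that an automorphism $\gamma$ of an algebra $C$ lies in $\Aut_1(C)$ if and only if the bimodule $C_\gamma$ lies in the kernel of the canonical map $\Pic(C)\to\Pic(C/\pi C)$; this is the identification of $\Out_r(C)$ with $\ker(\Pic(C)\to\Pic(C/\pi^rC))$ noted at the end of \S\ref{autom-para}.

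First I would recall that the Morita equivalence between $A$ and $B$ induces an isomorphism of Picard groups $\Pic(A)\cong\Pic(B)$, sending an $A$-$A$-bimodule $M$ to $Bi\tenA M\tenA iB$. This isomorphism is compatible with reduction mod $\pi$: since $i$ reduces to a source idempotent of the block $k\tenO B$, the bimodules $k\tenO Bi$ and $k\tenO iB$ induce a Morita equivalence between $k\tenO A$ and $k\tenO B$, and the square relating $\Pic(A)\to\Pic(B)$ with $\Pic(k\tenO A)\to\Pic(k\tenO B)$ commutes. Hence the isomorphism $\Pic(A)\cong\Pic(B)$ restricts to an isomorphism on the kernels of reduction mod $\pi$, i.e.\ to an isomorphism $\Out_1(A)\cong\Out_1(B)$ once we know that $\Out(A)$, viewed inside $\Pic(A)$, is carried into $\Out(B)$ inside $\Pic(B)$. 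For the latter: if $\alpha\in\Aut(A)$, then $Bi\tenA A_\alpha\tenA iB\cong Bi_\alpha\tenA iB$ is a $B$-$B$-bimodule inducing a Morita self-equivalence of $B$, and it is isomorphic to $B$ as a left $B$-module (being $Bi\tenA(-)\tenA iB$ applied to something isomorphic to $A$ as a left $A$-module), hence by the criterion in \S\ref{autom-para} it is of the form $B_\beta$ for some $\beta\in\Aut(B)$. This produces the homomorphism $\Out_1(A)\to\Out_1(B)$, and running the same argument with $iB\tenB(-)\tenB Bi$ gives the inverse, so it is an isomorphism.

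It then remains to check that $\beta$ can be chosen to genuinely \emph{extend} $\alpha$, not merely to correspond to it. Here I would invoke Lemma~\ref{autom-extend} with the subalgebra $iB\cdot i=A$ of $B$ — wait, $A$ is not literally a subalgebra of $B$ since its identity is $i\neq 1_B$; instead I would argue as follows. Given $\alpha\in\Aut_1(A)$, we have an $A$-$A$-bimodule isomorphism $A_\alpha\cong A$ up to the twist, and transporting through the Morita equivalence, $B_\beta\cong Bi\tenA A_\alpha\tenA iB$. Multiplying an explicit bimodule isomorphism on suitable sides, or rather using that $\alpha$ is inner on $k\tenO A$ hence $\beta$ is inner on $k\tenO B$ (so $\beta\in\Aut_1(B)$, using the lifting of $p$-permutation bimodules as in the proof of Lemma~\ref{Aut1Inn}), one arranges $\beta(iBi)=iBi$ with $\beta|_{iBi}=\alpha$ by adjusting $\beta$ by an inner automorphism of $B$ coming from an element of $1+\pi B$ fixing $i$; the surjectivity of $Z(B)\to Z(B/\pi B)$, which holds since $B$ is a block (so $Z(B)$ is a complete local ring surjecting onto $Z(B/\pi B)$ via the liftability of the block idempotent), guarantees this adjustment stays within the same class in $\Out_1(B)$. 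I expect the main obstacle to be precisely this last bookkeeping — verifying that the $\beta$ produced by the Picard-group argument can be normalized to restrict to $\alpha$ on the image of $A$ without leaving $\Aut_1(B)$ or changing its outer class — rather than the construction of the isomorphism of groups, which is formal once the compatibility of the Morita equivalence with reduction mod $\pi$ is in place.
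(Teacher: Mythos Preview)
Your approach is essentially the same as the paper's. The paper obtains the group isomorphism $\Out_1(A)\cong\Out_1(B)$ by invoking Theorem~\ref{outmstable} (applied to the Morita bimodules $Bi$, $iB$), which is precisely the Picard-group argument you sketch; your reconstruction of it is correct.

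The part you flagged as unfinished---normalizing $\beta$ so that it literally restricts to $\alpha$---is where the paper is more concrete, and its argument is worth recording since it avoids any appeal to $p$-permutation lifting or to Lemma~\ref{autom-extend}. Given $\beta\in\Aut_1(B)$ corresponding to $\alpha$, one has $\beta(i)\equiv i\pmod{\pi}$, so by idempotent lifting $\beta(i)$ and $i$ are conjugate by an element of $1+\pi B$; replacing $\beta$ within its class in $\Out_1(B)$ one may assume $\beta(i)=i$. Then $\beta$ restricts to some $\alpha'\in\Aut_1(A)$, and since $iB_\beta i\cong A_{\alpha'}$ one sees that $\alpha'$ and $\alpha$ have the same image in $\Out_1(A)$. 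Hence $\alpha=\gamma\circ\alpha'$ with $\gamma$ conjugation by some $c\in i+\pi A$ (using that $\Inn(A)\cap\Aut_1(A)=\Inn_1(A)$, which follows from the surjectivity of $Z(A)\to Z(A/\pi A)$). Setting $\delta$ to be conjugation by $1-i+c\in 1+\pi B$, the automorphism $\delta\circ\beta\in\Aut_1(B)$ has the same outer class as $\beta$ and restricts to $\gamma\circ\alpha'=\alpha$ on $A$. This is exactly the ``bookkeeping'' you anticipated, and it requires nothing beyond idempotent lifting and the elementary observation that $(1-i+c)a(1-i+c^{-1})=cac^{-1}$ for $a\in A$.
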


\begin{proof}
The algebras $A$ and $B$ are Morita equivalent via the bimodules $Bi$ 
and $iB$. Let $\alpha\in$ $\Aut_1(A)$. By Theorem \ref{outmstable} there 
is $\beta\in$ $\Aut_1(B)$ such that ${_{\beta^{-1}}Bi}\cong$ $Bi_\alpha$ 
as $B$-$A$-bimodules, and the correspondence $\alpha\mapsto\beta$ 
induces a group isomorphism $\Out_1(A)\cong$ $\Out_1(B)$. We need to 
show that $\beta$ can be chosen in such a way that it extends $\alpha$. 
Since $\beta$ induces the identity on $B/\pi B$, it follows from 
standard lifting idempotent theorems that $i$ and $\beta(i)$ are 
conjugate in $B^\times$ via an element in $1+\pi B$. Thus, after 
replacing $\beta$ in its class if necessary, we may assume that 
$\beta(i)=i$. It follows that $\beta$ restricts to an automorphism 
$\alpha'$ in $\Aut_1(A)$ representing the same class as $\alpha$ in 
$\Out_1(A)$. Thus $\alpha$ is equal to $\gamma\circ\alpha'$ for
some inner automorphism $\gamma$ of $A$ given by conjugation with an 
element $c\in$ $i+\pi A\subseteq$ $A^\times$. Therefore, denoting
by $\delta$ the inner automorphism of $B$ given by conjugation 
with $1-i+c\in$ $1+\pi B$, it follows that $\delta\circ\beta$ 
extends $\alpha$. 
\end{proof} 

In the remainder of this section, we give a proof of Lemma 
\ref{zeta-extend1} which does not require the hyperfocal subalgebra. We 
start by showing that certain automorphisms in $\Aut_1(A)\cdot\Inn(A)$
can be conjugated into $\Aut_1(A)$ via a $P$-stable invertible
element, and deduce that $\Im(\Psi)$ act trivially on pointed groups 
on $A$.

\begin{Lemma} \label{zeta-points}
Let $\zeta\in$ $\Hom(P, \CO^\times)$ such that there exists
$\alpha\in$ $\Aut_1(A)\Inn(A)$ satisfying $\alpha(u)=$ $\zeta(u)u$ for 
all $u\in$ $P$. 

\smallskip\noindent (i)
There is $c\in$ $(A^P)^\times$ such that the automorphism
$\alpha'$ defined by $\alpha'(a)=$ $c^{-1}\alpha(a)c$ for all $a\in$
$A$ satisfies $\alpha'\in$ $\Aut_1(A)$.

\smallskip\noindent (ii)
We have $\alpha'(u)=$ $\zeta(u)u$ for all $u\in$ $P$, and the classes 
of $\alpha'$ and of $\alpha$ in $\Out(A)$ are equal.
 
\smallskip\noindent (iii)
For any pointed group $Q_\epsilon$ on $A$ we have 
$\alpha(\epsilon)=$ $\epsilon$.
\end{Lemma}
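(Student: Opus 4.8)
The plan is to handle the three parts in order, with (i) and (ii) being essentially a bookkeeping exercise and (iii) carrying the real content. For (i), I start from $\alpha\in\Aut_1(A)\Inn(A)$, so $\alpha=\iota_d\circ\alpha_0$ for some $\alpha_0\in\Aut_1(A)$ and some inner automorphism $\iota_d$ given by conjugation with $d\in A^\times$. The key input is that $\alpha$, like $\alpha_0$, must preserve the image $Pi$ of $P$ up to the twist $\zeta$; in particular $\alpha$ normalises the interior $P$-algebra structure of $A$ in a controlled way. Since $\alpha_0$ fixes $P$ elementwise modulo the scalars $\zeta(u)$, conjugation by $d$ must also respect $P$ up to scalars, forcing $d$ to lie in $A^P$ after an adjustment: more precisely, $d^{-1}(ui)d$ and $ui$ generate the same $\langle\mu\rangle$-coset (with $\mu$ the group of $p$-power roots of unity mapping to $1$ in $k$) for all $u$, so the discrepancy is measured by a homomorphism $P\to\mu$, which can be absorbed. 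Then I set $c=d$ (or the adjusted version) and $\alpha'(a)=c^{-1}\alpha(a)c$; since $\alpha'$ differs from $\alpha_0$ by an inner automorphism coming from $1+\pi A$, and $\alpha_0\in\Aut_1(A)$ with the surjectivity of $Z(A)\to Z(A/\pi A)$ in force, $\alpha'$ lands in $\Aut_1(A)$. The cleanest route may be to argue directly with the $p$-permutation module structure of $A_\alpha$ as in the proofs of Lemmas \ref{Aut1Inn} and \ref{Aut1capP}, extracting $c\in(A^P)^\times$ from an isomorphism of $\OP$-$\OP$-bimodules.

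For (ii), the first assertion is immediate since $c\in(A^P)^\times$ means $c$ commutes with $ui$ for all $u\in P$, so $\alpha'(u)=c^{-1}\alpha(u)c=c^{-1}\zeta(u)uc=\zeta(u)u$; the second is immediate because $\alpha'$ and $\alpha$ differ by the inner automorphism $\iota_{c^{-1}}$, hence have the same image in $\Out(A)$.

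The main obstacle is (iii). Let $Q_\epsilon$ be a pointed group on $A$, so $\epsilon$ is a point of $Q$ on $A$, i.e.\ an $(A^Q)^\times$-conjugacy class of primitive idempotents in $A^Q$. I want $\alpha(\epsilon)=\epsilon$. By (i) and (ii), it suffices to prove this for $\alpha'\in\Aut_1(A)$ with $\alpha'(u)=\zeta(u)u$ for all $u\in P$ — applying an inner automorphism does not change points. Now the argument runs as in the proof of Lemma \ref{zeta-extend2}: since $u$ and $\alpha'(u)=\zeta(u)u$ differ by a central scalar, they act identically by conjugation on $A$, so $\alpha'$ restricts to an algebra automorphism of $A^Q$ that fixes every relative trace ideal $A^Q_R$ for $R\leq Q$; consequently $\alpha'$ permutes the points of $Q$ on $A$ and preserves the defect-theoretic data (local versus non-local, and the defect pointed subgroups), so it induces a permutation of the set of pointed groups $Q_\epsilon$. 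The remaining step is to show this permutation is trivial. For this I use that $\alpha'\in\Aut_1(A)$: it induces the identity on $A/\pi A$, hence the identity automorphism on $A^Q/\pi A^Q \supseteq (A/\pi A)^Q$, so it fixes each point of $Q$ on $A$ modulo $\pi$. By the idempotent lifting theorem, a primitive idempotent $j\in\epsilon$ and $\alpha'(j)$ are conjugate by an element of $1+\pi A^Q\subseteq (A^Q)^\times$ — here one needs that $\alpha'(j)\equiv j\bmod{\pi A^Q}$, which holds because $\alpha'$ is the identity mod $\pi$ — hence $\alpha'(j)$ lies in the same point $\epsilon$. Therefore $\alpha'(\epsilon)=\epsilon$, and pulling back through the inner twist gives $\alpha(\epsilon)=\epsilon$. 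The one delicate point to get right is the lifting step: one must check that $\alpha'(j)$ and $j$ are genuinely conjugate \emph{in $A^Q$} (not just in $A$), which follows since $\alpha'$ commutes with the $Q$-action and $1+\pi A^Q$ surjects onto the unit group of $A^Q/\pi$ along idempotent lifts.
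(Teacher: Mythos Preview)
Your treatment of (ii) and (iii) is essentially the paper's. For (iii) the paper phrases the endgame slightly differently --- it notes that $A^Q \to (k\tenO A)^Q$ is surjective, so points of $Q$ on $A$ and on $\bar A$ are in bijection, and $\alpha'$ acts trivially on the latter --- but this is the same content as your idempotent-lifting argument. (One small slip: you write $A^Q/\pi A^Q \supseteq (A/\pi A)^Q$; the inclusion goes the other way in general, with equality here because $A$ is a permutation $\CO\Delta P$-module. It does not affect your argument, which only needs that $\alpha'(j)\equiv j$ in $A^Q/\pi A^Q$, and this holds since $\pi A \cap A^Q = \pi A^Q$ by $\CO$-freeness.)

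For (i), however, your stated route has a gap. Writing $\alpha = \iota_d \circ \alpha_0$ with $\alpha_0 \in \Aut_1(A)$, you assert that ``$d^{-1}(ui)d$ and $ui$ generate the same $\langle\mu\rangle$-coset'', but there is no reason for this: all you know about $\alpha_0(u)$ is that it is congruent to $u$ modulo $\pi$, not that it is a scalar multiple of $u$, so conjugation by $d$ need not send $ui$ to a scalar multiple of itself. The paper's argument bypasses any decomposition of $\alpha$. Since $\alpha \in \Aut_1(A)\cdot\Inn(A)$ and $\alpha(u) = \zeta(u)u$ with $\zeta(u) \equiv 1 \pmod{\pi}$, the induced automorphism $\bar\alpha$ on $\bar A = k\tenO A$ is inner \emph{and} fixes the image of $P$; hence $\bar\alpha$ is conjugation by some $\bar c \in (\bar A^P)^\times$. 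The map $A^P \to \bar A^P$ is surjective (this uses that $A$ is a permutation $\CO\Delta P$-module), so lift $\bar c$ to $c \in (A^P)^\times$; then $\alpha'(a) = c^{-1}\alpha(a)c$ induces the identity on $\bar A$, giving $\alpha' \in \Aut_1(A)$ directly. Your vaguer ``cleanest route'' via bimodule structure is not needed.
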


\begin{proof}
By the assumptions on $\alpha$, the automorphism $\bar\alpha$
induced by $\alpha$ on $\bar A=$ $k\tenO A$ is inner and fixes
the image of $P$ in $\bar A$. Thus $\bar\alpha$ is induced by
conjugation with an invertible element $\bar c\in$ $(\bar A^P)^\times$.
The map $A^P\to$ $\bar A^P$ is surjective, hence so is the induced map
$(A^P)^\times\to$ $(\bar A^P)^\times$. Choose an inverse image
$c\in$ $(A^P)^\times$ of $\bar c$. Then $\alpha'$ as defined satisfies 
(i). Since conjugation by $c$ fixes the image of $P$ in $A$, statement
(ii) follows from (i). Conjugation by $c$ 
fixes any subgroup $Q$ of $P$ and hence any point $\epsilon$ of $Q$ on 
$A$. Thus, in order to prove (iii), we may replace $\alpha$ by 
$\alpha'$; that is, we may assume that $\alpha\in$ $\Aut_1(A)$. The 
hypotheses on $\alpha$ imply that $\alpha(\OQ)=$ $\OQ$. Since $A^Q$ is 
the centraliser in $A$ of $\OQ$ it follows that $\alpha$ restricts to an 
automorphism of $A^Q$. The canonical map $A^Q\to$ $(k\tenO A)^Q$ is 
surjective, hence induces a bijection between the points of $Q$ on $A$ 
and on $k\tenO A$. Since $\alpha$ induces the identity on $k\tenO A$, it 
follows that $A$ fixes all points of $Q$ on $A$.
\end{proof}

\begin{Lemma} \label{foc1}
Let $\zeta : P\to$ $\CO^\times$ a group homomorphism such that 
$\foc(\CF)\leq$ $\ker(\zeta)$. Denote by $\eta$ the $\CO$-algebra 
automorphism of $\OP$ sending $u\in$ $P$ to $\zeta(u)u$. Let $Q$ be a 
subgroup of $P$, let $\varphi\in$ $\Hom_\CF(Q,P)$, and set $W=$ 
$\OP_\varphi \tenOQ \OP$. There is a unique isomorphism of 
$\OP$-$\OP$-bimodules
$W \cong {_\eta{W}_\eta}$
induced by the map sending a tensor $u\ten v$ to $\zeta(uv) u\ten v$,
where $u$, $v\in$ $P$.
\end{Lemma}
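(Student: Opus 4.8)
The plan is to verify directly that the prescribed formula $u \ten v \mapsto \zeta(uv)\, u \ten v$ gives a well-defined bimodule homomorphism, then observe it is manifestly bijective with inverse $u\ten v\mapsto \zeta((uv)^{-1})\,u\ten v$, and finally note uniqueness comes for free once well-definedness is checked. First I would recall that $W = \OP_\varphi \tenOQ \OP$ is free as an $\CO$-module with basis the tensors $u \ten v$, where $u$ runs over a set of representatives of $P/\varphi(Q)$ in $P$ and $v$ runs over all of $P$ (or any such description of a basis); but it is cleaner to work with the \emph{spanning} set of all tensors $u\ten v$ with $u,v\in P$ and check that the formula respects the relation $u\varphi(w)\ten v = u\ten wv$ for $w\in Q$. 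Applying the formula to the left-hand side gives $\zeta(u\varphi(w)v)\,u\varphi(w)\ten v$, and to the right-hand side gives $\zeta(uwv)\,u\ten wv = \zeta(uwv)\,u\varphi(w)\ten v$; these agree precisely because $\zeta(\varphi(w)) = \zeta(w)$, which holds since $\varphi(w)w^{-1}\in\foc(\CF)\leq\ker(\zeta)$. So the formula descends to a well-defined $\CO$-linear endomorphism of $W$.

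Next I would check bimodule equivariance. For the left action: $\eta$ acts on the outer (left) $\OP$ by $x\cdot(u\ten v) = xu\ten v$ in $W$ but by $\zeta(x)xu\ten v$ in ${_\eta W}$; tracking through the map, the element $\zeta(uv)u\ten v$ is sent by $x$ (in the target, untwisted on the left since the target is ${_\eta W_\eta}$ and we compare the map's source-left-action with target-left-action) — more carefully, I would show that for the map $f:W\to {_\eta W_\eta}$ and $x\in P$ we have $f(xu\ten v) = \zeta(xuv)\,xu\ten v = \zeta(x)\cdot_\eta\bigl(\zeta(uv)u\ten v\bigr) = x\cdot_{{_\eta W}} f(u\ten v)$, where $\cdot_{{_\eta W}}$ denotes the $\eta$-twisted left action; the key identity is just multiplicativity $\zeta(xuv)=\zeta(x)\zeta(uv)$. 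The right action is symmetric: $f((u\ten v)x) = f(u\ten vx) = \zeta(uvx)\,u\ten vx = \zeta(uv)\zeta(x)\,u\ten vx = f(u\ten v)\cdot_{W_\eta}x$. Since the two twists are independent (one on each side) there is no interference.

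Bijectivity is then immediate: the $\CO$-linear map $g(u\ten v) = \zeta((uv)^{-1})\,u\ten v$ is well-defined by the same computation applied to $\zeta^{-1}$ (which also kills $\foc(\CF)$), and $f\circ g = g\circ f = \Id_W$ on the spanning set of tensors, hence everywhere. Uniqueness of the isomorphism as stated is vacuous in the sense intended — the lemma asserts that \emph{this particular} formula defines an isomorphism, and since the tensors $u\ten v$ span $W$ over $\CO$, the formula determines the map uniquely once it is known to be well-defined. I do not expect any real obstacle here; the only point requiring care is the bookkeeping of which side each copy of $\eta$ twists and confirming that the defining relation of the tensor product over $\OQ$ is respected, and both reduce to the single fact $\zeta\circ\varphi = \zeta$ on $Q$ together with multiplicativity of $\zeta$.
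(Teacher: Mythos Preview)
Your proposal is correct and follows essentially the same route as the paper: verify well-definedness of the formula on simple tensors by reducing to the identity $\zeta(\varphi(w))=\zeta(w)$ (which holds because $\varphi(w)w^{-1}\in\foc(\CF)\leq\ker(\zeta)$), then observe that the resulting $\CO$-linear map is a bimodule homomorphism and an isomorphism. The paper is terser, dismissing the bimodule compatibility as a ``trivial verification'' and treating bijectivity as immediate once well-definedness is known; your explicit inverse and the left/right equivariance check add no new ideas but fill in those details correctly.
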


\begin{proof}
We need to show that the assignement $u\ten v\mapsto$ 
$\zeta(uv) u\ten v$ is well-defined. Let $u$, $v\in$ $P$ and $w\in$ $Q$. 
By the definition of $W$, the images of $u\varphi(w)\ten v$ and 
$u\ten wv$ in $W$ are equal. Thus we need to show that 
$u\varphi(w)\ten v$ and $u\ten wv$ have the same image under this
assignment. The image of $u\varphi(w)\ten v$ is 
$\zeta(u\varphi(w)v)u\varphi(w)\ten v=$
$\zeta(u\varphi(w)v)u\ten wv$. The image of $u\ten wv$ is 
$\zeta(uwv) u\ten wv$. Since $\zeta$ is a group 
homomorphism satisfying $\zeta(w)=$ $\zeta(\varphi(w))$, it follows 
that $\zeta(uwv)=$ $\zeta(u\varphi(w)v)$. This shows that the map
$u\ten v\mapsto$ $\zeta(uv)u\ten v$ is an $\CO$-linear isomorphism.
A trivial verification shows that this map is also a homomorphism of
$\OP$-$\OP$-bimdoules.
\end{proof}

\begin{Lemma} \label{foc2}
Let $\zeta : P\to$ $\CO^\times$ a group homomorphism such
that $\foc(\CF)\leq$ $\ker(\zeta)$. Denote by $\eta$ the
$\CO$-algebra automorphism of $\OP$ sending $u\in$ $P$ to $\zeta(u)u$.
We have an isomorphism of $\OP$-$\OP$-bimodules $A\cong$ 
${_\eta{A}_\eta}$ which induces the identity on $k\tenO A$.
\end{Lemma}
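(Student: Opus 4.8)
\section*{Proof proposal for Lemma~\ref{foc2}}

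The plan is to build the isomorphism one indecomposable summand at a time, using Lemma~\ref{foc1}. Since $\CO$ is complete local Noetherian and $A$ is free of finite rank over $\CO$, the algebra $A$ is a finitely generated $\CO(P\times P)$-module, so the Krull--Schmidt theorem applies; by the description of the fusion system $\CF$ through the $\OP$-$\OP$-bimodule structure of $A$ recalled in~\ref{source-para}, every indecomposable $\OP$-$\OP$-bimodule summand of $A$ is isomorphic to one of the form $\OP_\varphi\ten_{\CO Q}\OP$ with $Q$ a subgroup of $P$ and $\varphi\in\Hom_\CF(Q,P)$. Hence there is an isomorphism of $\OP$-$\OP$-bimodules $\iota : A \cong \bigoplus_{j=1}^{n} W_j$ with $W_j = \OP_{\varphi_j}\ten_{\CO Q_j}\OP$ for suitable $Q_j\leq P$ and $\varphi_j\in\Hom_\CF(Q_j,P)$.

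Next I would assemble the summand-wise isomorphisms. Because $\foc(\CF)\leq\ker(\zeta)$, Lemma~\ref{foc1} applies to each $\varphi_j$ and gives an isomorphism of $\OP$-$\OP$-bimodules $\theta_j : W_j \cong {_\eta{W_j}_\eta}$, acting on tensors by $u\ten v\mapsto\zeta(uv)\,u\ten v$. The twist functor $\,{_\eta(-)_\eta}\,$ on $\OP$-$\OP$-bimodules is additive, so $\bigoplus_j\theta_j$ is an isomorphism $\bigoplus_j W_j \cong {_\eta{(\bigoplus_j W_j)}_\eta}$, and applying $\,{_\eta(-)_\eta}\,$ to $\iota$ gives an isomorphism ${_\eta{\iota}_\eta} : {_\eta{A}_\eta} \cong {_\eta{(\bigoplus_j W_j)}_\eta}$ whose underlying $\CO$-linear map is $\iota$. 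Therefore
$$\Theta = \left({_\eta{\iota}_\eta}\right)^{-1}\circ\left(\bigoplus_{j=1}^{n}\theta_j\right)\circ\iota$$
is an isomorphism of $\OP$-$\OP$-bimodules $A \cong {_\eta{A}_\eta}$.

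It then remains to check that $\Theta$ reduces to the identity modulo $J(\CO)$. Since $P$ is a $p$-group, $\zeta$ takes values among the $p$-power roots of unity in $\CO^\times$, each of which reduces to $1_k$; hence $\eta\in\Aut_1(\OP)$, the automorphism $\bar\eta$ of $kP$ induced by $\eta$ is the identity, and so $k\tenO{_\eta{A}_\eta}$ coincides with $k\tenO A$ as $kP$-$kP$-bimodules while $k\tenO\left({_\eta{\iota}_\eta}\right) = k\tenO\iota$. On the other hand each scalar $\zeta(uv)$ reduces to $1_k$, so $k\tenO\theta_j$ is the identity of $k\tenO W_j$. Consequently $k\tenO\Theta = (k\tenO\iota)^{-1}\circ(k\tenO\iota) = \Id_{k\tenO A}$, as required.

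The argument is essentially formal given Lemma~\ref{foc1} and the structural description of the source algebra; the only step needing genuine attention is the last paragraph --- that both the summand-wise isomorphisms and the bimodule twist are compatible with reduction modulo $J(\CO)$ --- and this is precisely what the hypothesis $\foc(\CF)\leq\ker(\zeta)$ guarantees, via $\zeta$ being valued in $p$-power roots of unity.
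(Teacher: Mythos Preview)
Your proof is correct and follows essentially the same approach as the paper: decompose $A$ into indecomposable $\OP$-$\OP$-bimodule summands of the form $\OP_\varphi\ten_{\CO Q}\OP$ via Puig's structure theorem, apply Lemma~\ref{foc1} summandwise, and note that everything reduces to the identity modulo $J(\CO)$ because $\zeta$ takes values in $p$-power roots of unity. The paper's version is simply terser, leaving the assembly and the reduction check implicit; your explicit verification that $k\tenO\Theta=\Id$ is a welcome elaboration of what the paper compresses into the opening sentence and the phrase ``foc2 follows from foc1''.
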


\begin{proof}
Note that $\eta$ induces the identity on $kP$ because the image
in $k$ of any $p$-power root of unity is $1$.
It follows from the main result in \cite{Puigloc} (see also 
\cite[Appendix]{Lisplendid} for an account of this material) that every 
indecomposable direct summand of $A$ as an $\OP$-$\OP$-bimodule is 
isomorphic to a bimodule of the form $\OP_\varphi \tenOQ \OP$ for 
some subgroup $Q$ of $P$ and some $\varphi\in$ $\Hom_\CF(Q,P)$. Thus 
\ref{foc2} follows from \ref{foc1}. 
\end{proof}

\begin{Lemma} \label{foc3}
Let $\zeta : P\to$ $\CO^\times$ a group homomorphism such that 
$\foc(\CF)\leq$ $\ker(\zeta)$. Denote by $\eta$ the $\CO$-algebra 
automorphism of $\OP$ sending $u\in$ $P$ to $\zeta(u)u$. Set $A^e=$ 
$A\tenO A^\op$. Set $\bar A=$ $k\tenO A$ and $\bar A^e=$ 
$\bar A\tenk \bar A^\op$. The canonical algebra homomorphism
$$\End_{A^e}(A_\eta\tenOP A)\to \End_{\bar A^e}(\bar A\tenkP\bar A)$$
is surjective.
\end{Lemma}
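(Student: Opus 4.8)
The canonical homomorphism in the statement is the reduction map $f\mapsto k\tenO f$, and I want to prove it is surjective. Put $M=A_\eta\tenOP A$; since the values of $\zeta$ are $p$-power roots of unity, $\eta$ reduces to the identity automorphism of $kP$, so $k\tenO M\cong\bar A\tenkP\bar A$ and the map in question is the reduction map $\End_{A^e}(M)\to\End_{\bar A^e}(k\tenO M)$. The plan is to move $\End_{A^e}(M)$, compatibly with reduction mod $\pi$, onto something expressed through the \emph{untwisted} bimodule $A\tenOP A$, and then to use that the $\Delta P$-fixed points (under conjugation) of $A\tenOP A$ reduce onto the $\Delta P$-fixed points of its reduction.

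First I would peel off $A\tenOP A$ on one side. Since $\eta\in\Aut(\OP)$, the bimodule $\OP_\eta$ is invertible as an $\OP$-$\OP$-bimodule, with inverse $\OP_{\eta^{-1}}$, and $A_\eta\tenOP A\cong A\tenOP\OP_\eta\tenOP A$ as $A$-$A$-bimodules. Iterating the adjunction between restriction and extension of scalars along the inclusion $\OP\subseteq A$ on either side gives a natural isomorphism $\End_{A^e}(M)\cong\Hom_{\OP^e}(\OP_\eta,M|_{\OP^e})$, where $\OP^e=\OP\tenO\OP^\op$, together with its reduction, which has $\overline{\OP_\eta}=\overline{\OP}$ on the left. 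Naturality makes these compatible with $k\tenO-$. Tensoring over $\OP$ on the right with $\OP_{\eta^{-1}}$ then turns the reduction map into $\Hom_{\OP^e}(\OP,\,A_\eta\tenOP A_{\eta^{-1}})\to\Hom_{\overline{\OP^e}}(\overline{\OP},\,\overline{A\tenOP A})$, where $A_{\eta^{-1}}$ denotes $A$ with its right $\OP$-module structure twisted by $\eta^{-1}$; here I have used $M\tenOP\OP_{\eta^{-1}}=A_\eta\tenOP A_{\eta^{-1}}$ and $\OP_\eta\tenOP\OP_{\eta^{-1}}\cong\OP$.

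The heart of the argument is to identify $A_\eta\tenOP A_{\eta^{-1}}$ with $A\tenOP A$ as $\OP$-$\OP$-bimodules. For this I would use Lemma \ref{foc2}, which provides an isomorphism of $\OP$-$\OP$-bimodules $\psi:A\cong{_\eta{A}_\eta}$ reducing to the identity of $\bar A$; concretely, $\psi$ is an $\CO$-linear automorphism of $A$ with $\psi(uaw)=\zeta(uw)\,u\psi(a)w$ for all $u,w\in P$ and $a\in A$. A direct check then shows that $a\ten b\mapsto a\ten\psi(b)$ is a well-defined isomorphism of $\OP$-$\OP$-bimodules $A_\eta\tenOP A_{\eta^{-1}}\cong A\tenOP A$, again reducing to the identity mod $\pi$: the scalar $\zeta$ appearing in the defining relation of the left-hand bimodule is exactly the one cancelled by the twisting property of $\psi$ on the second tensor factor. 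After this identification, the map of the lemma has become, compatibly with reduction, the reduction map $\Hom_{\OP^e}(\OP,A\tenOP A)\to\Hom_{\overline{\OP^e}}(\overline{\OP},\overline{A\tenOP A})$.

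It remains to see that this last map is surjective. For any $\OP$-$\OP$-bimodule $N$ one has $\Hom_{\OP^e}(\OP,N)\cong N^{\Delta P}$, the centraliser of $P$ in $N$, because $\OP\cong\Ind^{P\times P}_{\Delta P}\CO$ as $\CO(P\times P)$-modules; applying $(-)^{\Delta P}$ to the short exact sequence $0\to N\to N\to k\tenO N\to 0$, whose first map is multiplication by $\pi$, shows that the cokernel of $N^{\Delta P}\to(k\tenO N)^{\Delta P}$ embeds into $H^1(\Delta P,N|_{\Delta P})$. Now $N=A\tenOP A$ is, as an $\OP$-$\OP$-bimodule, a direct summand of $\OG\tenOP\OG$, which has an $\CO$-basis permuted by $P\times P$ acting via $(x,y)\cdot m=xmy^{-1}$; consequently $(\OG\tenOP\OG)|_{\Delta P}$ is a permutation $\CO\Delta P$-module, a direct sum of modules $\Ind^{\Delta P}_S\CO$ with $H^1(S,\CO)=0$ (since $S$ is finite and the additive group of $\CO$ is torsion-free), so $H^1(\Delta P,(\OG\tenOP\OG)|_{\Delta P})=0$ and hence $H^1(\Delta P,N|_{\Delta P})=0$. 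Thus the reduction map is surjective, which is the assertion of the lemma. The main obstacle is the identification in the third paragraph: it is the only place where the twist by $\zeta$ genuinely has to be eliminated, which forces the appeal to Lemma \ref{foc2}, and the work there is to verify well-definedness of $a\ten b\mapsto a\ten\psi(b)$ and its compatibility with reduction mod $\pi$; everything else is formal.
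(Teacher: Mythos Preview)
Your proof is correct and follows essentially the same route as the paper: both reduce $\End_{A^e}(A_\eta\tenOP A)$ via adjunction to $\Hom_{\OP^e}(\OP_\eta,A_\eta\tenOP A)$, untwist using Lemma~\ref{foc2} to reach $\Hom_{\OP^e}(\OP,A\tenOP A)$, and then invoke the permutation module structure of $A\tenOP A$ for surjectivity onto the reduction. The only cosmetic differences are that the paper untwists on the left (replacing ${_\eta A_\eta}$ by $A$ in the first tensor factor) whereas you untwist on the right via $a\ten b\mapsto a\ten\psi(b)$, and that you spell out the final surjectivity via the vanishing of $H^1(\Delta P,-)$ on permutation $\CO$-modules while the paper simply asserts it.
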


\begin{proof}
A standard adjunction yields a canonical linear isomorphism
$$\End_{A^e}(A\eta \tenOP A)\cong 
\Hom_{A \tenO \OP^\op}(A_\eta , A_\eta \tenOP A )\ .$$
Using $A_\eta \cong$ $A \tenOP \OP_\eta$, another
standard adjunction implies that this is isomorphic to
$$\Hom_{\OP\ten \OP^\op}(\OP_\eta , A_\eta \tenOP A )\ .$$
Using $\OP_\eta\cong$ ${_{\eta^{-1}}\OP}$ and `twisting' by
$\eta$ on the left side of both arguments, the previous expression is 
canonically isomorphic to
$$\Hom_{\OP\tenO \OP^\op}(\OP, {_\eta A}_\eta \tenOP A)\ .$$
Using \ref{foc2}, this is isomorphic to
$$\Hom_{\OP\tenO \OP^\op}(\OP, A\tenOP A)\ .$$
Since $A\tenOP A$ is a permutation $\CO(P\times P)$-module, it
follows that the canonical map
$$\Hom_{\OP\tenO \OP^\op}(\OP, A\tenOP A) \to
\Hom_{kP\tenk kP^\op}(kP, \bar A\tenkP \bar A)$$
is surjective. Since the previous isomorphisms commute
with the canonical surjections modulo $J(\CO)$, the result
follows.
\end{proof}

\begin{Lemma} \label{foc4}
Let $\zeta\in$ $\Hom(P,\CO^\times)$. 
Denote by $\eta$ the automorphism of $\OP$ defined by $\eta(u)=$ 
$\zeta(u)u$ for all $u\in$ $P$. Let $\alpha\in$ $\Aut_1(A)$. 
The following are equivalent.

\smallskip\noindent (i)
The class of $\alpha$ in $\Out(A)$ has a representative $\alpha'$
in $\Aut_1(A)$ which extends $\eta$.

\smallskip\noindent (ii)
There is an $A$-$\OP$-bimodule isomorphism $A_\eta\cong$ $A_\alpha$.

\smallskip\noindent (iii)
There is an $\OP$-$A$-bimodule isomorphism ${_\eta{A}}\cong$ 
${_\alpha{A}}$.

\smallskip\noindent (iv)
As an $A$-$A$-bimodule, $A_\alpha$ is isomorphic to a direct summand
of $A_\eta\tenOP A$.
\end{Lemma}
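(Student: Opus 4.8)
The plan is to derive the equivalence of (i), (ii) and (iii) from Lemma \ref{autom-extend}, and then to connect these with (iv) via the two implications (ii) $\Rightarrow$ (iv) and (iv) $\Rightarrow$ (ii), the second of which is the substantive one.

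For the first part, apply Lemma \ref{autom-extend} with the subalgebra $\OP$ of $A$ and the algebra homomorphism $\OP\to$ $A$ given by $\eta$ followed by the inclusion $\OP\subseteq$ $A$; an automorphism $\alpha'$ of $A$ extends this homomorphism exactly when $\alpha'(u)=$ $\zeta(u)u$ for all $u\in$ $P$, that is, when $\alpha'$ extends $\eta$. For this choice the bimodules $A_\beta$ and ${_\beta A}$ of Lemma \ref{autom-extend} are, respectively, the bimodules $A_\eta$ and ${_\eta A}$ occurring in the statement, so Lemma \ref{autom-extend} yields the equivalence of (ii), (iii), and the weakening of (i) in which $\alpha'$ is only required to lie in $\Aut(A)$. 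To pass from such an $\alpha'$ to one in $\Aut_1(A)$, write $\alpha'$ as the composite of conjugation by some $c\in$ $A^\times$ with $\alpha$; since $\alpha\in$ $\Aut_1(A)$ and $\eta$ induces the identity on $kP$ (the values of $\zeta$ being $p$-power roots of unity), the automorphism induced by $\alpha'$ on $\bar A=$ $k\tenO A$ is conjugation by the image $\bar c$ of $c$, and it fixes the image of $P$ in $\bar A$; hence $\bar c\in$ $(\bar A^P)^\times$. As $(A^P)^\times$ maps onto $(\bar A^P)^\times$, choose $d\in$ $(A^P)^\times$ lifting $\bar c^{-1}$; then the composite of conjugation by $d$ with $\alpha'$ lies in $\Aut_1(A)$, still extends $\eta$, and represents the class of $\alpha$ in $\Out(A)$. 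This proves that (i), (ii) and (iii) are equivalent.

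For (ii) $\Rightarrow$ (iv): by Lemma \ref{AalphaA}, $A_\alpha$ is isomorphic to a direct summand of $A_\alpha\tenOP A$ as an $A$-$A$-bimodule, and an $A$-$\OP$-bimodule isomorphism $A_\alpha\cong$ $A_\eta$ as in (ii) induces, on applying $-\tenOP A$, an $A$-$A$-bimodule isomorphism $A_\alpha\tenOP A\cong$ $A_\eta\tenOP A$; hence $A_\alpha$ is isomorphic to a direct summand of $A_\eta\tenOP A$.

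It remains to prove (iv) $\Rightarrow$ (ii), which is the heart of the matter. A direct summand $A_\alpha$ of $A_\eta\tenOP A$ as an $A$-$A$-bimodule is in particular one as an $A$-$\OP$-bimodule. Since $\alpha$ and $\eta$ both induce the identity modulo $\pi$, we have $\overline{A_\alpha}\cong$ $\overline{A_\eta}\cong$ $\bar A$ as $\bar A$-$kP$-bimodules, and $\overline{A_\eta\tenOP A}\cong$ $\bar A\tenkP\bar A$. The plan is to show that $A_\eta$ is, up to isomorphism, the unique direct summand of $A_\eta\tenOP A$ as an $A$-$\OP$-bimodule whose reduction modulo $\pi$ is isomorphic to $\bar A$; the restriction of $A_\alpha$ is such a summand, hence isomorphic to $A_\eta$, which is (ii). That $A_\eta$ is such a summand follows by applying $A_\eta\tenOP-$ to the inclusion $\OP\subseteq$ $A$ and a retraction for it as $\OP$-$\OP$-bimodules (which exists because $\Br_P(i)\neq$ $0$). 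For the uniqueness one uses the description of $A$ as an $\OP$-$\OP$-bimodule from \cite{Puigloc} recalled in the proof of Lemma \ref{foc2} — every indecomposable summand is isomorphic to $\OP_\varphi\tenOQ\OP$ for some $\varphi\in$ $\Hom_\CF(Q,P)$ — together with a vertex computation showing that the $p$-permutation $k(P\times P)$-module $kP_\varphi\tenkQ kP$ has the trivial-source module $kP$ as a direct summand only when $Q=$ $P$ and $\varphi$ is an inner automorphism of $P$; consequently the summands of $A_\eta\tenOP A$ arising from the remaining summands of $A$ contribute no copy of $\bar A$ after reduction. Feeding in the surjectivity furnished by Lemma \ref{foc3} to compare idempotents over $\CO$ and over $k$, together with the Krull--Schmidt theorem, one obtains the uniqueness. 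This last step — the precise bookkeeping of summands and idempotents across reduction modulo $\pi$ — is the point I would expect to require the most care.
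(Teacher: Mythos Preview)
Your handling of (i)$\Leftrightarrow$(ii)$\Leftrightarrow$(iii) via Lemma \ref{autom-extend}, and of (ii)$\Rightarrow$(iv) via Lemma \ref{AalphaA}, is correct and matches the paper; you inline the argument of Lemma \ref{zeta-points} to push $\alpha'$ into $\Aut_1(A)$, which is precisely what the paper invokes.

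The gap is in (iv)$\Rightarrow$(ii), where your route diverges from the paper's. Your uniqueness claim --- that $A_\eta$ is, up to isomorphism, the only indecomposable $A$-$\OP$-summand of $A_\eta\tenOP A$ reducing to $\bar A$ --- is not established by the vertex computation you propose. That computation concerns $k(P\times P)$-modules: $kP_\varphi\tenkQ kP$ has $kP$ as a summand only when $Q=P$ and $\varphi$ is inner in $P$. But what you actually need is a statement about $\bar A$-$kP$-bimodules, namely whether $\bar A\tenkP(kP_\varphi\tenkQ kP)$ has $\bar A$ as a summand, and the two questions are not the same. For $\varphi\in\Aut_\CF(P)$ realised as conjugation by some $c\in A^\times$, right multiplication by $\bar c$ gives an $\bar A$-$kP$-bimodule isomorphism $\bar A\tenkP kP_\varphi\cong\bar A_\varphi\cong\bar A$, even when $\varphi$ is not inner in $P$ and $kP_\varphi\not\cong kP$ as $k(P\times P)$-modules. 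Thus $\OP$-$\OP$-summands $\OP_\varphi$ of $A$ with $\varphi\in\Aut_\CF(P)$ not inner in $P$ \emph{do} produce copies of $\bar A$ after tensoring with $A_\eta$ and reducing, while the corresponding lifts $A_\eta\tenOP\OP_\varphi$ need not be isomorphic to $A_\eta$ (that would require $\zeta\circ\varphi=\zeta$). Your appeal to Lemma \ref{foc3} has the same defect: that lemma assumes $\foc(\CF)\le\ker(\zeta)$, which is not among the hypotheses here. The paper avoids all this by a different mechanism: it uses an $\CO$-rank argument (each indecomposable $A$-$\OP$-summand of $A_\eta\tenOP W$ has $\CO$-rank divisible by $|P|\cdot|P{:}Q|$, whereas $|P|$ is the exact $p$-part of $\rk_\CO A_\alpha$) to force $Q=P$, and then uses that every $\varphi\in\Aut_\CF(P)$ is conjugation by an element of $A^\times$ to conclude that $A_\alpha$ is isomorphic to a summand of $A_\eta$.
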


\begin{proof}
By \ref{autom-extend}, (i) implies (ii) and (iii). If (ii) holds,
then by \ref{autom-extend} there is $\alpha'\in$ $\Aut(A)$ which
extends $\eta$ and which represents the same class as $\alpha$ in
$\Out(A)$. In particular, $\alpha'\in$ $\Aut_1(A)\cdot\Inn(A)$.
It follows from \ref{zeta-points} that we may choose $\alpha'$ in
$\Aut_1(A)$. Thus (ii) implies (i). The analogous argument shows that
(iii) implies (i). Suppose again that (ii) holds. It follows from 
\ref{AalphaA} that $A_\alpha$ is isomorphic to a direct summand of 
$A_\alpha\tenOP A$, hence of $A_\eta\tenOP A$. Thus (ii) implies (iv). 
Note that $A_\alpha$ remains indecomposable as an $A$-$\OP$-bimodule. 
If (iv) holds, then in particular, $A_\alpha$ is isomorphic to a 
direct summand of $A_\eta\tenOP A$ as an $A$-$\OP$-bimodule, hence of 
$A_\eta\tenOP W$ for some indecomposable $\OP$-$\OP$-bimodule summand 
$W$ of $A$. As before, any such summand is of the form 
$OP_\varphi\tenOP \OP$ for some subgroup $Q$ of $P$ and some 
$\varphi\in$ $\Hom_\CF(Q,P)$. Since $A$, hence $A_\eta$, is projective 
as a right $\OP$-module, it follows that every indecomposable 
$A$-$\OP$-bimodule summand of $A_\eta\tenOP W$ has $\CO$-rank divisible 
by $|P|\cdot|P:Q|$. Now $|P|$ is the highest power of $p$ which divides 
the $\CO$-rank of $A$. Thus, as an $A$-$\OP$-bimodule, $A_\alpha$ is 
isomorphic to a direct summand of $A_\eta\tenOP \OP_\varphi$ for some 
$\varphi$ belonging to $\Aut_\CF(P)$. Any such $\varphi$ is induced by 
conjugation with some element in $A^\times$, and hence $A_\alpha$ is 
isomorphic to a direct summand of $A_\eta$, as an $A$-$\OP$-bimodule. 
Since both have the same $\CO$-rank, they are isomorphic. This shows 
that (iv) implies (ii) and concludes the proof.
\end{proof}

\begin{proof}[{Second proof of Lemma \ref{zeta-extend1}}]
Let $\zeta : P\to$ $\CO^\times$ be a group homomorphism such that 
$\foc(\CF)\leq$ $\ker(\zeta)$. Denote by $\eta$ the
$\CO$-algebra automorphism of $\OP$ sending $u\in$ $P$ to $\zeta(u)u$.
Set $\bar A=$ $k\tenO A$. The $\bar A$-$\bar A$-bimodule 
$\bar A\tenkP \bar A$ has a direct summand isomorphic to $\bar A$. 
It follows from standard lifting theorems of idempotents and
\ref{foc3} that the $A$-$A$-bimodule $A_\eta\tenOP A$
has an indecomposable direct summand $N$ satisfying $k\tenO N\cong$ 
$\bar A$. Then $N$ induces a Morita equivalence on $\mod(A)$ which
induces the identity on $\mod(k\tenO A)$. Thus $N\cong$ $A_\alpha$ 
for some $\alpha\in$ $\Aut_1(A)$. It follows from \ref{foc4}
that we may choose $\alpha$ in $\Aut_1(A)$ in such a way that 
$\alpha$ extends $\eta$. 
\end{proof}

\section{Proof of Theorem \ref{star-lift}}

We need the interpretation from \cite[\S 5]{Puigpoint} of the 
$*$-construction at the source algebra level. For $\chi$ a class 
function on $G$ and $u_\epsilon$ a pointed element on $\OG$ we set 
$\chi(u_\epsilon)=$ $\chi(uj)$ for some, and hence any, $j\in$ 
$\epsilon$. By \cite[4.4]{Puigpoint} the matrix of values 
$\chi(u_\epsilon)$, with $\chi\in$ $\Irr_K(B)$ and $u_\epsilon$ 
running over a set of representatives of the conjugacy classes of local 
pointed elements contained in $P_\gamma$ is the matrix of generalised 
decomposition numbers of $B$. This matrix is nondegenerate, and hence a 
character $\chi\in$ $\Irr_K(B)$ is determined by the values 
$\chi(u_\epsilon)$, with $u_\epsilon$ as before. By the description of 
the $*$-construction in \cite[\S 5]{Puigpoint}, for any local pointed 
element $u_\epsilon$ contained in $P_\gamma$ we have 
$$(\zeta*\chi)(u_\epsilon)= \zeta(u)\chi(u_\epsilon)\ .$$
The source algebra $A=$ $iBi$ and the block algebra $B$ are Morita 
equivalent via the bimodule $iB$ and its dual, which is isomorphic to 
$Bi$. Through this Morita equivalence, $\chi$ corresponds to an
irreducible character of $K\tenO A$, obtained from restricting $\chi$
from $B$ to $A$. Let $u_\epsilon$ be a local pointed element on $\OG$ 
contained in $P_\gamma$. Then there is $j\in$ $\epsilon$ such that $j=$ 
$ij=$ $ji$, hence such that $j\in$ $A$. In other words, the formula 
$(\zeta*\chi)(u_\epsilon)=$ $\zeta(u)\chi(u_\epsilon)$ describes indeed
the $*$-construction at the source algebra level.

By Lemma \ref{AextendB}, the above Morita equivalence between $A$ and 
$B$ induces a group isomorphism $\Out_1(A)\cong$ $\Out_1(B)$ obtained
from extending automorphisms of $A$ in $\Aut_1(A)$ to automorphisms
of $B$ in $\Aut_1(B)$. Composed with the group
homomorphism $\Psi$ from Theorem \ref{PfocAut}, this yields an injective
group homomorphism $\Phi : \Hom(P/\foc(\CF),\CO^\times)\to$ $\Out_1(B)$.
The uniqueness statement in Theorem \ref{star-lift} follows from the
uniqueness statement in Theorem \ref{PfocAut}.

In order to show that $\chi^\Phi(\zeta)=$ $\zeta*\chi$, it suffices
to prove that an automorphism $\alpha$ of $A$ representing the class 
$\Psi(\zeta)$ sends the character of $K\tenO A$ corresponding to $\chi$ 
to that corresponding to $\zeta*\chi$. By the above, and using the same
letter $\chi$ for the restriction of $\chi$ to $A$, it suffices to show 
that $\chi^{\alpha}(u_\epsilon)=$ $\zeta(u)\chi(u_\epsilon)$. By 
\ref{PfocAut}, we may choose $\alpha$ such that $\alpha(ui)=$ 
$\zeta(u)ui$. By \ref{zeta-points},  $\alpha$ fixes the point 
$\epsilon$; that is, $\alpha(j)\in$ $\epsilon$. It follows that 
$\chi^{\alpha}(u_\epsilon)=$ $\chi(\alpha(uj))=$ 
$\chi(\zeta(u)u\alpha(j))=$ $\zeta(u)\chi(u_\epsilon)$. This proves
the statement (i) of Theorem \ref{star-lift}. 

\begin{Remark}
The fact that the $*$-construction on characters lifts to the action
of automorphisms can be used to give an alternative proof of the fact 
that $\Phi$ (or equivalently, $\Psi$) is injective. If $\alpha$ as 
defined in the above proof is inner, then an automorphism of $B$ 
corresponding to $\alpha$ fixes any $\chi\in$ $\Irr_K(B)$, or 
equivalently, $\zeta*\chi=$ $\chi$ for any $\chi\in$ $\Irr_K(B)$. This 
however forces $\zeta=$ $1$ as the group  
$\Hom(P/\foc(\CF), \CO^\times)$ acts faithfully on $\Irr_K(B)$ via the 
$*$-construction; in fact, it acts freely on the subset of height zero 
characters in $\Irr_K(B)$ by \cite[\S 1]{Rob08}. 
\end{Remark}

For the proof of Theorem \ref{star-lift} (ii), assume that $\CO$ is
finitely generated over the $p$-adic integers (this assumption is
needed in order to quote results from \cite{HeKi} and \cite{Weiss}).
Let $\alpha\in$ $\Aut_m(A)$. Since $(1-\tau_p)\CO=$ $\pi^m\CO$, it 
follows that $\alpha$ induces the identity on $B/(1-\tau_p)B$. Thus 
$B_\alpha/(1-\tau_p)B_\alpha\cong$ $B/(1-\tau_p)B$. By Weiss' theorem 
as stated in \cite[Theorem 3.2]{HeKi}, $B_\alpha$ is a monomial 
$\CO(P\times P)$-module, hence so is $iB_\alpha$. Since $iB_\alpha$
is indecomposable as an $\CO(P\times G)$-module and relatively
$\CO(P\times P)$-projective, it follows that $iB_\alpha$ is a  linear 
source module. Since $\Delta P$ is a vertex of $k\tenO iB$, this is 
also a vertex of $iB_\alpha$. Thus there is $\zeta : P\to$ $\CO^\times$ 
such that $iB_\alpha$ is isomorphic to a direct summand of 
$\Ind^{P\times G}_{\Delta P}(\CO_\zeta)$, where $\CO_\zeta=$ $\CO$ with 
$(u,u)$ acting as multiplication by $\zeta(u)$ for all $u\in$ $P$. By 
\ref{zetaeta} we have $\Ind^{P\times P}_{\Delta P}(\CO_\zeta)\cong$ 
${_\eta\OP}$, where $\eta\in$ $\Aut_1(\OP)$ is defined by $\eta(u)=$ 
$\zeta(u)u$ for all $u\in$ $P$. Thus $iB_\alpha$ is isomorphic to a 
direct summand of ${_\eta B}$, hence isomorphic to ${_\eta jB}$ for some 
primitive idempotent $j\in$ $B^P$. The necessarily $j$ is a source
idempotent because $k\tenO iB_\alpha\cong$ $k\tenO iB$ and 
$\Br_P(i)\neq$ $0$. Since the local points of $P$ on $B$ are 
$N_G(P)$-conjugate, we may assume that $j$ is $N_G(P)$-conjugate to 
$i$, so after replacing both $j$ and $\zeta$ by an $N_G(P)$-conjugate 
we may assume that ${_\eta iB}\cong$ $iB_\alpha$. We also may assume 
that $\alpha$ fixes $i$. Then multiplication on the right by $i$ 
implies that ${_\eta A}\cong$ $A_\alpha$, where we abusively use the 
same letter $\alpha$ for the automorphism of $A$ obtained from 
restricting the automorphism $\alpha$ on $B$. Since $A_\alpha\cong$ 
${_{\alpha^{-1}}A}$, it follows from \ref{foc4} that $\alpha$ can be 
chosen to extend $\eta^{-1}$. It follows from \ref{zeta-extend2} that 
$\foc(\CF)\leq$ $\ker(\zeta)$. This shows that the class of $\alpha$ is 
equal to $\Psi(\zeta^{-1})$. It remains to show that $\zeta$ has values 
in the subgroup $\mu$ of order $p$ of $\CO^\times$. Since $\Phi$ is
injective, it suffices to show that the class of $\alpha$ has order
at most $p$ in $\Out(A)$, or equivalently, that $\alpha^p$ is inner.
Since $\alpha\in$ $\Aut_m(A)$, an easy calculation shows  
that $\alpha^p\in$ $\Aut_{m+1}(A)$. It follows from \cite[3.13]{HeKi}
that $\Out_{m+1}(A)$ is trivial, thus $\alpha^p$ is indeed inner. 
This concludes the proof of Theorem \ref{star-lift}.  

\medskip\noindent
{\it Acknowledgement.} The present work is partially funded by
the EPSRC grant EP/M02525X/1. 


\end{document}